\renewcommand{\baselinestretch}{1.1}
\renewcommand{\l}{\ell}
\theoremstyle{plain}
\newtheorem{theorem}{Theorem}[section]
\newtheorem{lemma}[theorem]{Lemma}
\newtheorem{corollary}[theorem]{Corollary}
\newtheorem{proposition}[theorem]{Proposition}
\newtheorem{claim}[theorem]{Claim}
\theoremstyle{definition}
\renewcommand{\leq}{\leqslant}
\renewcommand{\geq}{\geqslant}
\newcommand{\N}{\mathbb{N}}
\newcommand{\DEF}{\sl}    
\newcommand{\eps}{\varepsilon}  
\newcommand{\mc}{\mathcal}
\newcommand{\F}{\mathcal{F}}
\newcommand{\M}{\mathcal{M}}
\newcommand{\PP}{\mathcal{P}}
\newcommand{\tsep}[1]{pw-$#1$-separation}
\begin{document}

\title{Excluded Forest Minors and the {E}rd\H{o}s--{P}\'osa Property}

\author{Samuel Fiorini}
\address{\newline  D\'epartement de Math\'ematique
\newline Universit\'e Libre de Bruxelles
\newline Brussels, Belgium}
\email{sfiorini@ulb.ac.be}

\author{Gwena\"el Joret}
\address{\newline  D\'epartement d'Informatique 
\newline Universit\'e Libre de Bruxelles
\newline Brussels, Belgium}
\email{gjoret@ulb.ac.be}

\author{David~R.~Wood}
\address{\newline School of Mathematical Sciences
\newline Monash University
\newline Melbourne, Australia}
\email{david.wood@monash.edu}

\subjclass[2000]{05C83 Graph minors, 05C35 Extremal problems}

\thanks{Research of David Wood is supported by the Australian Research Council.}

\date{\today}
\sloppy
\maketitle

\begin{abstract}
A classical result of Robertson and Seymour states that the set of graphs containing a fixed planar graph $H$ as a minor has the so-called Erd\H{o}s-P\'osa property; namely, there exists a function $f$ depending only on $H$ such that, for every graph $G$ and every positive integer $k$, the graph $G$ has $k$ vertex-disjoint subgraphs each containing $H$ as a minor, or there exists a subset $X$ of vertices of $G$ with $|X| \leq f(k)$ such that $G - X$ has no $H$-minor (see {\it N.~Robertson and P.~D.~Seymour. Graph minors. V. Excluding a planar graph. J. Combin. Theory Ser.~B, 41(1):92--114, 1986}). While the best function $f$ currently known is exponential in $k$, a $O(k \log k)$ bound is known in the special case where $H$ is a forest. This is a consequence of a theorem of Bienstock, Robertson, Seymour, and Thomas on the pathwidth of graphs with an excluded forest-minor. In this paper we show that the function $f$ can be taken to be linear when $H$ is a forest. This is best possible in the sense that no linear bound is possible if $H$ has a cycle. 
\end{abstract}

\section{Introduction}

Let $\F$ be a finite set of graphs, which we will typically think of as
a set of excluded (or forbidden) minors. 
Given a graph $G$, an {\DEF $\F$-packing} (or simply {\DEF packing}) in $G$
is a collection of vertex-disjoint subgraphs of $G$ each containing a member of 
$\F$ as a minor. The maximum size of such a collection is denoted $\nu_{\F}(G)$.
A dual notion is that of an {\DEF $\F$-transversal} (or simply {\DEF transversal}) 
of $G$, which is defined as a subset $X$ of vertices of $G$ such that $G-X$
contains no member of $\F$ as a minor. The minimum size of such a set $X$ is denoted 
$\tau_{\F}(G)$.

Thus if $\mc G$ is a proper minor-closed class of graphs and $\F$ is the
corresponding set of minimal forbidden minors (which is finite by the graph minor theorem~\cite{RS04}), 
then $\tau_{\F}(G)$ is the minimum number of vertices to remove from $G$ to obtain 
a graph in the class $\mc G$, while $\nu_{\F}(G)$ is the maximum number
of vertex-disjoint forbidden minors in $G$. 

These two graph invariants generalize  
some classical invariants in graph theory 
and combinatorial optimization: If $\F = \{K_{2}\}$
then it is easily seen that $\nu_{\F}(G)$ is the maximum size of a matching in $G$,
while $\tau_{\F}(G)$ is the minimum size of a vertex cover in $G$. 
Similarly, if $\F = \{K_{3}\}$, then $\nu_{\F}(G)$ is the maximum size of
a cycle packing in $G$, and $\tau_{\F}(G)$ is the minimum size of a cycle transversal (also known as a feedback vertex set) of $G$. 

Clearly $\tau_{\F}(G) \geq \nu_{\F}(G)$ for every $\F$ and every graph $G$. 
If at least one graph in $\F$ is planar then 
the two parameters are ``tied'' to each other in the following sense: 
There exists a function $f$ depending only on $\F$ such that 
$\tau_{\F}(G) \leq f(\nu_{\F}(G))$  for every graph $G$. 
This was shown by Robertson and Seymour~\cite{RS_five}\footnote{It should be 
noted that this result is proved in~\cite{RS_five}  
in the special case where $\F$ consists of a single planar graph $H$. 
However the general case follows by a straightforward
modification of the proof of (8.8) in~\cite{RS_five}.} and 
is a consequence of their well-known excluded grid theorem~\cite{RS_five}. 
It is often referred to as
the {\DEF Erd\H{o}s--P\'osa property} of $\F$
(or more accurately, of the set of graphs contractible to some graph in $\F$), 
because {E}rd\H{o}s and P\'osa~\cite{EP} proved
the existence of such a function $f$ when $\F=\{K_{3}\}$. 

Robertson and Seymour's result is best possible in the sense
that no such function $f$ exists if no graph in $\F$ is planar 
(see~\cite{RS_five}). However, the function $f$ that follows
from their proof is huge (exponential). 
This is because, as mentioned above, the proof relies on 
their excluded grid theorem, namely for every $r \in \N$
there exists a minimum integer $g(r)$ such that every graph 
with no $r\times r$-grid minor has treewidth at most $g(r)$,     
and the current best upper bound on $g(r)$ is exponential: 
$g(r) \in 2^{O(r^{2}\log r)}$~\cite{KK12, LS12}. 
(As for lower bounds, only $g(r) \in \Omega(r^{2}\log r)$ is known~\cite{QuicklyPlanar}.)
 
The situation changes drastically if instead of simply assuming that some
graph in $\F$ is planar, we further require that $\F$ contains a forest $F$.  
Bienstock, Robertson,  Seymour, and Thomas~\cite{QuicklyForest} proved that
every graph with pathwidth at least $|F|-1$ contains $F$ as a minor. 
Using this result, one can derive without much difficulty that
$\tau_{\F}(G) \in O_{\F}(k \log k)$ where $k=\nu_{\F}(G)$; 
see Proposition~\ref{prop:log} in Section~\ref{sec:preliminaries}. 
As far as we are aware, this is the best bound that is currently known. 

In this paper we prove that a linear bound holds when $\F$ contains a forest:

\begin{theorem}
\label{thm:main}
Let $\F$ be a finite set of graphs containing at least one forest. 
Then there exists a computable constant $c = c(\F)$ such that $\tau_{\F}(G) \leq c \cdot \nu_{\F}(G)$ 
for every graph $G$.
\end{theorem}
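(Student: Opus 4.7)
The plan is to argue by induction on $k := \nu_\F(G)$. The base case $k=0$ is immediate, since then $G$ has no member of $\F$ as a minor and $\tau_\F(G) = 0$. For the inductive step, it suffices to produce a vertex set $A \subseteq V(G)$ of size bounded by some constant $c_0 = c_0(\F)$ with $\nu_\F(G - A) \le k-1$: a transversal of $G - A$ of size at most $c(k-1)$ (by the inductive hypothesis) combined with $A$ yields a transversal of $G$ of size at most $c_0 + c(k-1) \le ck$, provided $c \ge c_0$.

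A naive attempt is to take $A := V(P_1)$ where $\mathcal{P} = \{P_1, \dots, P_k\}$ is a maximum $\F$-packing: any $\F$-packing in $G - V(P_1)$ extended by $P_1$ would contradict maximality, so $\nu_\F(G - V(P_1)) \le k-1$. The snag is that $V(P_1)$ may be arbitrarily large, so one needs a bounded-size substitute. This is where the forest hypothesis enters. Setting $p := |V(F)|$, maximality of $\mathcal{P}$ forces $G - V(\mathcal{P})$ to contain no $F$-minor, and hence, by the theorem of Bienstock, Robertson, Seymour, and Thomas, to have pathwidth at most $p-2$. Fix a path decomposition $(B_1, \dots, B_t)$ of $G - V(\mathcal{P})$ of that width. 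Using the edges of $G$ between the $P_i$'s and $G - V(\mathcal{P})$, one should be able to pick an ``extremal'' packing member $P_i$ together with a single bag $B_j$, of size at most $p-1$, that separates $V(P_i)$ from $V(P_{i'})$ in $G$ for every $i' \neq i$; the extremal choice should moreover ensure that the component $C$ of $G - B_j$ containing $P_i$ meets no other packing member.

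The heart of the argument, and the step I expect to be the main obstacle, is to build a $\F$-transversal $A' \subseteq V(C) \cup B_j$ of $G[V(C) \cup B_j]$ of size bounded by a constant depending only on $\F$. The graph $G[V(C) \cup B_j]$ need not itself have bounded pathwidth, since the branch sets of an $H_i$-minor-model inside $P_i$ may be arbitrarily large. My plan is to contract each branch set of a chosen $H_i$-model to a single vertex: the resulting graph has pathwidth at most $(p-2) + |V(H_i)|$, at which point a standard bounded-pathwidth Erd\H{o}s--P\'osa bound should yield a transversal of constant size, which pulls back along the contractions to a constant-size set $A'$ in $G$. Then $A := A' \cup B_j$ has size $O_\F(1)$, and any $\F$-packing in $G-A$ lies entirely in the non-$C$ side of $B_j$; augmenting it by $P_i$ would contradict maximality of $\mathcal{P}$ unless it already has size at most $k-1$, giving $\nu_\F(G-A) \le k-1$ as required. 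Turning the extremal-choice step and the contraction-pullback step into rigorous statements is where I expect most of the technical work to lie.
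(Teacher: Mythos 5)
Your high-level plan (induct on $k=\nu_\F(G)$ and find a set $A$ of size $O_\F(1)$ with $\nu_\F(G-A)\le k-1$) matches the skeleton of the paper's argument, but the mechanism you propose for producing $A$ has a fatal gap at the ``extremal choice'' step. You want a single bag $B_j$ of a path decomposition of $G-V(\mathcal P)$ that separates $V(P_i)$ from all other packing members \emph{in $G$}. No such bounded-size separator need exist: the packing members themselves supply connectivity across the path decomposition. For instance, let $G-V(\mathcal P)$ be a long path and attach each $P_{i'}$ by edges to vertices spread along its entire length, with the attachment sets of different members interleaving; then removing any constant number of vertices leaves every $P_{i'}$ in the same component as every other. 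This is not a technicality to be cleaned up --- it is precisely the core difficulty of the theorem. The paper circumvents it with entirely different machinery: a folio-based reduction operation (Lemma~\ref{lem:reduction}, built on MSO techniques in Lemma~\ref{lem:MSO}) that shrinks large $T$-minor-free pieces hanging off small separators while preserving both $\nu$ and $\tau$, combined with Lemma~\ref{lem:main_pw}, which shows that in a graph admitting no such reduction one can find a \emph{constant-size} connected subgraph of pathwidth at least $|T|-1$ (hence containing $T$ as a minor); deleting that subgraph is what decreases $\nu$ by one.

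A second, independent flaw is the contraction--pullback step. A transversal of the graph obtained by contracting the branch sets of an $H_i$-model does not pull back to a set of the same size in $G$: a contracted vertex in the transversal corresponds to an entire branch set, which can be arbitrarily large (e.g.\ when $P_i$ is a subdivision of $H_i$ with long subdivided edges), and deleting a single vertex from a branch set does not destroy the minor. This particular step could be repaired without contraction --- since $V(C)\cup B_j$ meets no other packing member, maximality gives $\nu_\F(G[V(C)\cup B_j])=1$, and the elementary $O(k\log k)$ bound (Proposition~\ref{prop:log}) with $k=1$ already yields a transversal of size $O_\F(1)$ --- but that repair does not help with the missing separation, without which the component $C$ is not even well defined in the way you need.
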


If for some $t\in \N$, we let $\F$ be the (finite) set of minimal forbidden minors
for the class of graphs with pathwidth at most $t$, 
then $\F$ contains at least one tree. This follows from the fact that trees have unbounded pathwidth 
(for instance, the complete binary tree of height $h$ has pathwidth $h$). 
Using that $\F$ is computable~\cite{AGK08, L98}, we obtain the following corollary from Theorem~\ref{thm:main}.

\begin{corollary}
\label{cor:main}
For every $t\in \N$ there exists a computable constant $c=c(t)$ such that, 
for every graph $G$ and every $k\in \N$, either $G$ contains
$k$ vertex-disjoint subgraphs each with pathwidth at least $t + 1$, or $G$ has a vertex subset $X$ 
of size at most $c\cdot k$ such that $G-X$ has pathwidth at most $t$. 
\end{corollary}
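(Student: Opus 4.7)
The plan is to derive the corollary as a direct application of Theorem~\ref{thm:main} to a carefully chosen family $\F$. Let $\F_t$ denote the set of minimal forbidden minors for the (minor-closed) class of graphs of pathwidth at most $t$. By the graph minor theorem~\cite{RS04}, $\F_t$ is finite, and as already noted in the excerpt it is computable by the algorithms of~\cite{AGK08, L98}.

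The key hypothesis to verify before invoking Theorem~\ref{thm:main} is that $\F_t$ contains a forest. Using the fact (explicitly stated in the excerpt) that the complete binary tree $T$ of height $t+1$ has pathwidth $t+1 > t$, the tree $T$ is not in the class, and hence must contain some $F \in \F_t$ as a minor. Since every minor of a tree is itself a forest, $F$ is a forest, as required.

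I would then apply Theorem~\ref{thm:main} to $\F_t$ to obtain a computable constant $c = c(t)$ satisfying $\tau_{\F_t}(G) \leq c \cdot \nu_{\F_t}(G)$ for every graph $G$, and split into two cases on $\nu_{\F_t}(G)$. If $\nu_{\F_t}(G) \geq k$, then $G$ contains $k$ vertex-disjoint subgraphs, each having some member of $\F_t$ as a minor; since pathwidth is minor-monotone and every element of $\F_t$ itself has pathwidth at least $t+1$ (being a forbidden minor for the class of pathwidth at most $t$), each of these subgraphs has pathwidth at least $t+1$, giving the first alternative. Otherwise $\nu_{\F_t}(G) \leq k - 1$, so Theorem~\ref{thm:main} yields $\tau_{\F_t}(G) \leq c(k-1) \leq ck$; the corresponding transversal $X$ has $|X| \leq ck$ and $G - X$ contains no member of $\F_t$ as a minor, i.e., $G - X$ has pathwidth at most $t$.

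This is essentially a turnkey deduction from Theorem~\ref{thm:main}, so no serious obstacle arises. The only genuinely new input beyond the theorem itself is the observation that $\F_t$ contains a forest, which is handled by the complete-binary-tree argument above; computability of the constant then follows from the computability of $\F_t$ together with that of the constant provided by Theorem~\ref{thm:main}.
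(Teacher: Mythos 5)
Your proof is correct and is essentially the same as the paper's: the paper also takes $\F$ to be the (finite, computable) set of minimal forbidden minors for pathwidth at most $t$, notes that it contains a tree because trees (e.g.\ complete binary trees) have unbounded pathwidth, and then invokes Theorem~\ref{thm:main}. Your spelled-out case split on $\nu_{\F_t}(G)$ and the observation that a minor of a tree is a forest are exactly the routine details the paper leaves implicit.
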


Interestingly, Corollary~\ref{cor:main} becomes false  
if we replace pathwidth by treewidth, even for $t=1$: 
A graph has treewidth at least $2$ if and only if it contains a cycle, and   
there are graphs with no $k$ vertex-disjoint cycles such that
every cycle transversal has size $\Omega(k\log k)$ (see~\cite{EP}). 
More generally, for every fixed set $\F$ containing a planar graph but no forest, there are graphs $G$ with $\nu_{\F}(G) = k$
and $\tau_{\F}(G) \in \Omega_{\F}(k \log k)$; 
this follows from the existence of 
$n$-vertex graphs $G$ with treewidth $\Omega(n)$ and girth $\Omega(\log n)$.\footnote{Indeed, by~\cite{RS_five} graphs containing no member of $\F$ as a minor have 
treewidth at most $c$ for some constant $c = c(\F)$ (since $\F$ contains a planar graph). 
Thus $G- X$ has treewidth at most $c$ for every $\F$-transversal $X$ of $G$, and hence
$\tau_{\F}(G) = \Omega(n)$. On the other hand, every subgraph of $G$ containing 
a member of $\F$ as a minor has a cycle, and thus has $\Omega(\log n)$ vertices, implying $\nu_{\F}(G) = O(n / \log n)$. 

We remark that to obtain $n$-vertex graphs $G$ with treewidth $\Omega(n)$ and girth $\Omega(\log n)$ it suffices to consider $d$-regular $n$-vertex 
expanders of girth $\Omega(\log n)$ for some fixed $d$, such as the 
Ramanujan graphs constructed by Lubotzky, Phillips, and Sarnak~\cite{LPS88} for 
instance. 
The fact that the treewidth is $\Omega(n)$ is a direct consequence of their positive 
vertex-expansion (or see~\cite [Proposition~1]{GM09}).  
} 
In this sense Theorem~\ref{thm:main} is best possible. 

Our proof of Theorem~\ref{thm:main} can very briefly be described as follows: 
First suppose for simplicity that $\F = \{T\}$ with $T$ a tree; the general case can 
be reduced to this one without much difficulty. 
We show that every graph $G$ either has a model of $T$ of constant size, or a reduction 
operation can be applied to $G$, producing a graph $G'$ with 
$\nu_{\F}(G) = \nu_{\F}(G')$ and 
$\tau_{\F}(G) = \tau_{\F}(G')$ which is smaller than $G$. 
In the first case we remove all vertices of the model and apply induction, while 
in the second case we are done by applying induction on $G'$. 
The reduction operation is described in Section~\ref{sec:reduction}. 
Section~\ref{sec:proof} is devoted to the proof 
that every {\em reduced} graph contains a constant-size $T$-model. 
Section~\ref{sec:algorithms} describes a number of algorithmic applications 
of (the proof of) Theorem~\ref{thm:main}. 

We conclude the introduction by mentioning two 
recent related results. 
First, Diestel, Kawarabayashi, and Wollan~\cite{DKW12} 
studied the $\F=\{K_{t}\}$ case and showed that, while 
$\tau_{\F}(G)$ cannot be bounded from above by a function of $\nu_{\F}(G)$  
when $t \geq 5$, such a function exists 
for graphs with high enough vertex-connectivity  
compared to 
$\nu_{\F}(G)$ and $t$: There exists a function $f$ such that, 
for every $t, k \geq 0$, every $(k(t - 3) + 14t + 14)$-connected graph $G$ 
either has an $\F$-packing of size $k$ or an $\F$-transversal of size at most $f(k, t)$. 
Moreover, for fixed $t$ the connectivity requirement is best possible up to an additive constant.

Second, Fomin, Saurabh, and Thilikos~\cite{ErdosPosaMinorClosed} 
considered restricting the graphs $G$ under consideration to belong to some fixed
proper minor-closed family of graphs $\mc G$, and 
showed that if $\F=\{H\}$ with $H$ a connected planar graph, then
there exists a constant $c=c(\F, \mc G)$ such that
$\tau_{\F}(G) \leq c \cdot \nu_{F}(G)$ for every $G \in \mc G$. 
Note that, as mentioned above, there is no such linear bound if we do not
impose $G \in \mc G$.

\section{Definitions and Preliminaries}
\label{sec:preliminaries}

All graphs in this paper are finite, simple, and undirected, unless otherwise
stated.  Let $V(G)$ and $E(G)$ denote the vertex and edge sets of a
graph $G$. Let $[i,j]:=\{i, i + 1, \dots, j\}$. 

A {\DEF tree decomposition} of a graph $G$ is a pair
$(T, \{B_{x} : x \in V(T)\})$ where $T$ is a tree, and $\{B_{x}
: x \in V(T)\}$ is a family of subsets of $V(G)$ (called {\DEF
 bags}) such that
 \begin{itemize}
 \item $\bigcup_{x \in V(T)} B_{x} = V(G)$;
 \item for every edge $uv \in E(G)$, there exists $x \in V(T)$ with
   $u,v \in B_{x}$, and
 \item for every vertex $u\in V(G)$, the set $\{x\in V(T) : u \in
   B_{x}\}$ induces a subtree of $T$.
 \end{itemize}
 The {\DEF width} of a tree decomposition $(T, \{B_{x} : x \in
 V(T)\})$ is $\max \{ |B_{x}| - 1 : x \in V(T)\}$.  The {\DEF
   treewidth} of $G$ is the minimum width among all tree
 decompositions of $G$. See \cite{DiestelBook} for an introduction to
 the theory of treewidth.  
 A {\DEF path decomposition} of $G$ is a tree decomposition of $G$ where
 the underlying tree $T$ is a path. 
 We will denote such a decomposition simply by the sequence 
 $B_{1}, \dots, B_{p}$ of its bags (in order). 
 The {\DEF pathwidth} of $G$ is the minimum width of a path decomposition of $G$.

A graph $H$ is a {\DEF minor} of a graph $G$ if $H$ can be obtained
from a subgraph of $G$ by contracting edges. (Note that, since we only
consider simple graphs, loops and parallel edges created during an
edge contraction are deleted.)  An {\DEF $H$-model} in
$G$ is a collection $M=\{S_{x}: x\in V(H)\}$ of vertex-disjoint
connected subgraphs of $G$ (called {\DEF branch sets}) such that, for
every edge $xy \in E(H)$, some edge in $G$ joins a vertex in $S_{x}$
to a vertex in $S_{y}$.  
The {\DEF vertex set} of $M$ is $V(M) := \cup\{V(S_{x}): x\in V(H)\}$. 
Clearly, $H$ is a minor of $G$ if and only if
$G$ contains an $H$-model.

The following result of Bienstock, Robertson,  Seymour, and Thomas~\cite{QuicklyForest} 
will be routinely used in our proofs (see~\cite{DiestelShort} for a short proof).

\begin{theorem}[\cite{QuicklyForest}]
\label{thm:quickly_forest}
Every graph with pathwidth at least $t-1$ contains every forest on $t$ vertices as a minor.  
\end{theorem}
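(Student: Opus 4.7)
The plan is to proceed by induction on $t$. The base case $t=1$ is immediate: pathwidth at least $0$ forces $|V(G)| \geq 1$, and a single vertex of $G$ is a $K_1$-model. The edgeless case of the inductive step is equally direct: if $F$ consists of $t$ isolated vertices, then pathwidth at least $t-1$ forces $|V(G)| \geq t$, since any graph on fewer than $t$ vertices admits a single-bag path decomposition of width at most $t-2$; then any $t$ distinct singletons of $G$ form an $F$-model.

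For the inductive step with $F$ having an edge, I pick a leaf $v$ of $F$ and let $u$ be its unique neighbour in $F$. Setting $F' := F - v$, which is a forest on $t-1$ vertices, and noting that the pathwidth of $G$ is at least $t-1 \geq (t-1)-1$, the induction hypothesis produces an $F'$-model $\M' = \{S_x : x \in V(F')\}$ in $G$. If some vertex $w \in V(G) \setminus V(\M')$ has a neighbour in the branch set $S_u$, then putting $S_v := \{w\}$ extends $\M'$ to an $F$-model in $G$ and the step is complete.

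The crux of the proof is to ensure that such an extending vertex $w$ exists for some suitable choice of $F'$-model. I would argue by contradiction, taking $G$ to be a counterexample with $|V(G)|$ minimum. If no $F'$-model admits an extension, then for every choice of $\M'$ all neighbours of $S_u$ outside $S_u$ lie in the other branch sets, so $V(\M') \setminus S_u$ separates $S_u$ from $R := V(G) \setminus V(\M')$ in $G$. By the vertex-minimality of $G$, the proper subgraph $G - S_u$ has pathwidth at most $t-2$ and hence admits a path decomposition of width $\leq t-2$; the goal is then to graft a bounded-size sequence of bags covering $S_u$ together with its small external neighbourhood onto this decomposition, producing a path decomposition of $G$ of width $\leq t-2$ and contradicting the assumption.

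The hardest step is making this gluing respect the width bound: it requires choosing $\M'$ so that the separator $V(\M') \setminus S_u$ is as small as possible, most naturally by minimising $|V(\M')|$ so that each branch set is a minimum-size subtree, and it almost certainly requires strengthening the inductive hypothesis to control how $S_u$ and its attachment to the rest of $\M'$ sit inside the path decomposition of $G - S_u$ produced by the induction (for example, by rooting $F$ at a chosen vertex and demanding that the corresponding branch set be anchored at a prescribed vertex of $G$, so that the induction on the smaller subgraph can be applied in a way compatible with the gluing). Executing this strengthening and the subsequent separator analysis is the principal obstacle; everything else in the proof is bookkeeping driven by the leaf-removal reduction described above.
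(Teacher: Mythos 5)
The paper does not actually prove this statement -- it is quoted as Theorem~\ref{thm:quickly_forest} from Bienstock--Robertson--Seymour--Thomas, with a pointer to Diestel's short proof -- so there is no in-paper argument to compare against. Judged on its own terms, your proposal has a genuine gap exactly where you say it does: the extension step is the entire theorem, and neither the leaf-stripping induction nor the sketched repair resolves it. The induction hypothesis hands you \emph{some} $F'$-model $\M'$, but there is no reason that this particular model, or even a carefully chosen one, leaves a vertex of $G - V(\M')$ adjacent to $S_u$; a model can simply ``use up'' the wrong part of the graph. Declaring this ``the principal obstacle'' and deferring it means the proof is not there.

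The proposed rescue also does not go through as described. If no vertex outside $V(\M')$ sees $S_u$, then $N(S_u) \subseteq V(\M') \setminus S_u$, but this set is \emph{not} small: even for a minimal model, branch sets can be arbitrarily long trees, so $S_u$ may have unboundedly many external neighbours scattered throughout the width-$(t-2)$ path decomposition of $G - S_u$ supplied by minimality. Reinserting $S_u$ then requires bags covering $S_u$ together with enough of $N(S_u)$ to capture all edges leaving $S_u$, and there is no sub-path of the decomposition whose bags all contain $N(S_u)$; the grafting step therefore cannot respect the width bound $t-2$ without a much stronger inductive statement, which you have not formulated. This is precisely why the known proofs avoid leaf-stripping altogether: the original argument uses a minimax duality for pathwidth, and Diestel's short proof fixes an ordering $t_1, \dots, t_t$ of $V(F)$ in which each vertex has at most one earlier neighbour and builds the path decomposition and the (partial) models simultaneously by a greedy exchange argument, rather than first fixing a model of $F - v$ and then trying to extend it. If you want to complete a proof along these lines, I would start from that simultaneous construction rather than from a minimal counterexample.
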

 
Note that the bound in Theorem~\ref{thm:quickly_forest} is best possible. 

We proceed with a few simple lemmas that will be used in our proof of 
Theorem~\ref{thm:main}. 
The following lemma is a special case of (8.6) in~\cite{RS_five}. 

\begin{lemma}[\cite{RS_five}]
\label{lem:intervals}
Let $P$ be a path and let $\PP_{1}, \dots, \PP_{m}$ be families of subpaths of $P$. 
Let $x_{1}, \dots, x_{m} \geq 0$ be integers, and let $k := x_{1} + \cdots + x_{m}$. 
Suppose that for each $i\in [1,m]$ there are $k$ members of $\PP_{i}$ that are 
pairwise vertex-disjoint. 
Then there exist $x_{i}$ members $P^{i}_{1}, \dots, P^{i}_{x_{i}}$ in $\PP_{i}$ for each $i\in [1,m]$
such that $P^{1}_{1}, \dots, P^{1}_{x_{1}}, P^{2}_{1}, \dots, 
P^{2}_{x_{2}}, \dots, P^{m}_{1}, \dots, P^{m}_{x_{m}}$ are all pairwise vertex-disjoint. 
\end{lemma}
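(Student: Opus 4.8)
The statement is essentially a scheduling/interval-selection fact, so the natural strategy is induction on $m$, reducing the problem of choosing from $m$ families to choosing from $m-1$ families on a shorter path. The base case $m=1$ is trivial: we are promised $k=x_1$ pairwise disjoint members of $\PP_1$, and we simply take them. For the inductive step, I would first dispose of trivial indices: if some $x_i=0$, that family contributes nothing and may be deleted, so we may assume all $x_i\ge 1$; in particular $x_1\ge 1$. The idea is then to pick the $x_1$ subpaths from $\PP_1$ greedily from one end of $P$ — say, repeatedly take the subpath whose right endpoint is leftmost among those disjoint from the ones already chosen — obtaining $P^1_1,\dots,P^1_{x_1}$ occupying the ``leftmost'' part of $P$, and then argue that on the remaining part of $P$ each of $\PP_2,\dots,\PP_m$ still has $k-x_1 = x_2+\cdots+x_m$ pairwise disjoint members, so the inductive hypothesis applies.

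The crux is the counting claim: after removing the vertices covered by the $x_1$ greedily chosen subpaths of $\PP_1$, the remaining graph $P'$ (a subpath, or a disjoint union of subpaths, of $P$) still contains, for each $i\ge 2$, at least $k-x_1$ pairwise disjoint members of $\PP_i$. Here is where the greedy choice and the fact that $P$ is a \emph{path} (so its subpaths are intervals) are used: fix $i\ge 2$ and take $k$ pairwise disjoint members $Q_1,\dots,Q_k$ of $\PP_i$, ordered along $P$. Each chosen subpath $P^1_j$ of $\PP_1$ is an interval; because of the ``leftmost right endpoint'' rule, the union $P^1_1\cup\dots\cup P^1_{x_1}$ meets only an initial segment of the $Q_\ell$'s, and in fact each $P^1_j$ can destroy (intersect) at most one of the disjoint intervals $Q_\ell$ that has not already been destroyed by $P^1_1,\dots,P^1_{j-1}$ — a standard interval-packing exchange argument. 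Hence at most $x_1$ of the $Q_\ell$ are hit, leaving at least $k-x_1$ of them intact and contained in $P'$. Applying the inductive hypothesis to $P'$, the families $\PP_2,\dots,\PP_m$, the integers $x_2,\dots,x_m$ (whose sum is $k-x_1$), and then adjoining $P^1_1,\dots,P^1_{x_1}$ gives the desired collection.

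The main obstacle I anticipate is making the ``each $P^1_j$ kills at most one fresh $Q_\ell$'' claim airtight. The subtlety is that a single interval $P^1_j$ can in principle overlap several of the pairwise disjoint intervals $Q_\ell$; what saves us is the order in which the greedy algorithm picks the $P^1_j$ together with the fact that $P^1_1,\dots,P^1_{x_1}$ are themselves pairwise disjoint and swept from left to right. One clean way to organize this is to prove directly that the leftmost $k-x_1$ of the intervals $Q_{x_1+1},\dots,Q_k$ are disjoint from all of $P^1_1,\dots,P^1_{x_1}$: since $P^1_j$ is chosen with the leftmost possible right endpoint among subpaths of $\PP_1$ disjoint from $P^1_1,\dots,P^1_{j-1}$, and since $Q_{x_1+1},\dots,Q_k$ lie to the right of $Q_1,\dots,Q_{x_1}$, an induction on $j$ shows that the right endpoint of $P^1_j$ lies to the left of (or the vertex just before) the left endpoint of $Q_{j+1}$. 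With that inequality in hand the disjointness is immediate, and the rest of the argument is bookkeeping. Alternatively, this is exactly (8.6) of~\cite{RS_five}, so one may also simply cite it; but the inductive argument above is short and self-contained.
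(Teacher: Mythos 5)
The paper does not prove this lemma itself; it cites it as a special case of (8.6) of Robertson and Seymour~\cite{RS_five}, which is also the fallback you mention at the end. Your self-contained alternative, however, has a genuine flaw: choosing the $x_1$ subpaths greedily from $\PP_1$ \emph{alone} and then recursing does not preserve the hypothesis for the remaining families. Consider $P$ with vertices $1,\dots,200$, $m=2$, $x_1=x_2=1$ (so $k=2$), $\PP_1=\{[1,100],[101,200]\}$ and $\PP_2=\{[1,2],[3,4]\}$. Each family has $k=2$ pairwise disjoint members, so the hypothesis holds. Your greedy picks $P^1_1=[1,100]$, since it has the leftmost right endpoint in $\PP_1$; but this single interval covers both members of $\PP_2$, so $\PP_2$ is left with $0$ (not $k-x_1=1$) disjoint members on what remains. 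The ``induction on $j$'' claim — that the right endpoint of $P^1_j$ lies to the left of the left endpoint of $Q_{j+1}$ — already fails at $j=1$ here ($100\not<3$). The obstacle is structural: the greedy inside $\PP_1$ has no information about where $\PP_2,\dots,\PP_m$ live, and an interval that is ``short'' in the sense of having the leftmost right endpoint in $\PP_1$ may still be long enough to wipe out every disjoint member of another family.

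The standard repair is to greedy over \emph{all} families simultaneously: maintain a frontier moving left to right, and at each of $k$ steps select, among all members of all families $\PP_i$ with remaining demand $x_i>0$ lying entirely to the right of the frontier, the one with the leftmost right endpoint; assign it to its family, decrement that $x_i$, and advance the frontier past its right endpoint. The invariant is that after $s$ steps every family with positive remaining demand retains at least $k-s$ pairwise disjoint members entirely beyond the frontier: if $Q_1,\dots,Q_{k-s}$ are such members ordered left to right, then $Q_1$ was a valid candidate at this step, so its right endpoint is at least that of the path just chosen, and hence $Q_2,\dots,Q_{k-s}$ lie entirely beyond the new frontier. This is the argument behind (8.6) in~\cite{RS_five}, and — contrary to your plan — it cannot be decomposed into ``first finish $\PP_1$, then recurse on $\PP_2,\dots,\PP_m$.''
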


The next lemma can be derived 
from the proof of (8.8) in~\cite{RS_five}. We provide a proof
for completeness. 

\begin{lemma}
\label{lem:bounded_pathwidth}
Let $\F$ be a set of $q \geq 1$ graphs, each with at most $r$ components,  
and let $t \geq 1$. 
Then $\tau_{\F}(G) \leq 2qrt\cdot \nu_{\F}(G)$ for every graph $G$ with 
pathwidth strictly less than $t$. 
\end{lemma}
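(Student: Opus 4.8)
The plan is to prove Lemma~\ref{lem:bounded_pathwidth} by induction on $|V(G)|$, using a path decomposition of small width to either locate a model of some member of $\F$ on a bounded number of consecutive bags, or else conclude that $G$ contains no member of $\F$ as a minor at all.

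First I would fix a path decomposition $B_1, \dots, B_p$ of $G$ of width at most $t-1$, so every bag has at most $t$ vertices. Let $H\in\F$ be a graph with the minimum number $s$ of vertices, say with $c\le r$ components. If $G$ has no member of $\F$ as a minor, then $\tau_\F(G)=0$ and the bound holds trivially, so assume otherwise. I claim that in this case one can find a model of some $F\in\F$ whose vertex set is contained in a union of at most $2rt$ consecutive bags; more precisely, I would show there is an index $a$ such that $G[B_a\cup B_{a+1}\cup\cdots\cup B_b]$ contains a member of $\F$ as a minor for some $b$ with $b-a+1$ bounded. To see this, take any member $F\in\F$ that is a minor of $G$, pick an $F$-model $M$ in $G$, and for each component $C$ of $F$ consider the interval of bags meeting $V(M_C)$ (where $M_C$ is the union of the branch sets of vertices of $C$); since $M_C$ is connected, the bags meeting it form a subpath of the decomposition path, i.e.\ an interval. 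If some such interval $I_C$ has length more than $t\cdot|V(C)|$, then the subgraph of $G$ induced by the bags in $I_C$ has pathwidth at least $|V(C)|-1$ wait—this needs the right counting: a path decomposition of width $<t$ restricted to an interval of $\ell$ bags still has width $<t$, so that does not immediately bound pathwidth from below. Instead I would argue: if $I_C$ has more than $t|V(C)|$ bags, then by a pigeonhole / interval-packing argument (Lemma~\ref{lem:intervals} applied to a single family) one can extract $|V(C)|$ pairwise "far apart" bags inside $I_C$, hence $|V(C)|$ vertex-disjoint connected subgraphs joined in a path-like pattern, and in fact the subgraph induced on $I_C$ has pathwidth at least $|V(C)|-1$, so by Theorem~\ref{thm:quickly_forest} it contains every forest on $|V(C)|$ vertices as a minor. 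Hmm, but $C$ need not be a forest and $H$ need not be a forest—so this only works with the forest member. Let me restructure.

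The cleaner route, which I would actually carry out: let $F\in\F$ be arbitrary with $c\le r$ components $C_1,\dots,C_c$, and suppose $F$ is a minor of $G[\bigcup_{i\in[a,b]}B_i]$ for some interval $[a,b]$ of minimum total length $b-a+1$. Via the model $M$ and the connectivity of each $M_{C_j}$, each $M_{C_j}$ occupies an interval of bags; by minimality of $[a,b]$ these intervals together cover $[a,b]$, and there are at most $c\le r$ of them, so at least one, say $M_{C_1}$, occupies an interval of at least $(b-a+1)/r$ bags. If $b-a+1 > 2rt$ then that interval has more than $2t$ bags, and then—this is the step I expect to be the main obstacle—I need to show $G$ restricted to just that interval still contains all of $F$ as a minor, or contains a smaller member of $\F$, contradicting minimality. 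The way to get traction is Theorem~\ref{thm:quickly_forest} combined with the fact that $\F$ contains at least one forest... no, $\F$ need not here. Let me commit to the argument that genuinely uses width $<t$: if the interval occupied by $M_{C_1}$ has more than $2t$ bags, then $G$ restricted to that interval has a path decomposition of width $<t$ but linked along more than $2t$ bags while remaining connected—one can then find two bags $B_x,B_{x'}$ with $x<x'$, $x'-x>t$, such that $B_x$ and $B_{x'}$ each meet $M_{C_1}$; since $|B_x|\le t$ and $M_{C_1}$ is connected, contracting $M_{C_1}$ shows the relevant separator structure lets us "shortcut," producing a model of $F$ on a strictly shorter interval, contradicting minimality. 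Hence $b-a+1\le 2rt$, so $|B_a\cup\cdots\cup B_b|\le 2rt\cdot t$... that is $2rt^2$, too big; I need the bound $2qrt$. So the correct accounting must be: the interval has at most $2rt$ bags is wrong—rather, I should bound the number of bags by $2r$ times something, or bound $|V(\text{model})|$ directly by the number of relevant bags times $t$, then set $X$ to be that vertex set.

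Putting the counting together properly: I would show that whenever $G$ (of pathwidth $<t$) has a member of $\F$ as a minor, it has one with a model whose vertex set lies in at most $2r\cdot q$ bags—hence has at most $2qrt$ vertices—and moreover this vertex set $Y$ can be chosen so that $G-Y$ still has pathwidth $<t$ (trivially, as it is a subgraph) while $\nu_\F(G-Y)\ge \nu_\F(G)-1$ and $\tau_\F(G)\le\tau_\F(G-Y)+|Y|$. Then induction on $|V(G)|$ gives $\tau_\F(G)\le 2qrt+2qrt\cdot\nu_\F(G-Y)\le 2qrt(1+\nu_\F(G)-1)=2qrt\cdot\nu_\F(G)$, completing the proof; the base case $\nu_\F(G)=0$ is immediate since then $G$ has no $\F$-minor and $\tau_\F(G)=0$. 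The genuinely delicate point, and the one I would spend the most care on, is the \emph{shortcutting} lemma that controls the length of the bag-interval needed to host a model of a single component: given a connected subgraph living across many bags of a width-$<t$ path decomposition, one must produce a model of the same component confined to few bags, and the bound $2t$ (so $2rt$ after summing over $\le r$ components, but then why $q$?) suggests the real statement sums over the $q$ members of $\F$ as well—I would obtain this by first fixing the member $F\in\F$ \emph{of minimum order} that is a minor of $G$ and arguing only about its at most $r$ components, getting interval length $\le 2rt$ bags and thus $|Y|\le 2rt\cdot t$; to recover the stated $2qrt$ I would instead bound the number of bags meeting the model by $2r$ (not $2rt$) using a cleverer extraction, so that $|Y|\le 2rt$ — and the factor $q$ then absorbs the reduction from a general $\F$ to its minimum-order member. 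In any case, the skeleton is: (i) reduce to the minimum-order member via minimality; (ii) prove the shortcutting/interval-length bound using Lemma~\ref{lem:intervals} or a direct separator argument on the width-$<t$ decomposition; (iii) delete the bounded vertex set and apply induction.
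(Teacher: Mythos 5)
Your proposal rests on a claim that is false: that whenever a graph $G$ of pathwidth less than $t$ contains some $F \in \F$ as a minor, $G$ has an $F$-model whose vertex set is contained in boundedly many bags (and hence has at most $2qrt$ vertices). To see this fails, take $\F = \{P_{100}\}$, the path on $100$ vertices, and let $G$ be a path on $201$ vertices; then $t=2$ and $q=r=1$, yet every $P_{100}$-model in $G$ uses at least $100$ vertices, far exceeding $2qrt=4$. A connected minor in a low-pathwidth graph can be stretched across arbitrarily many bags and cannot be ``shortcut'' into a short bag-interval: the step you flag as delicate is not merely hard to prove, it is untrue. Consequently the ``delete a small model and recurse'' scheme cannot even start.

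The paper's proof takes a dual, non-inductive route that sidesteps this entirely. Fix a path decomposition $B_1,\dots,B_p$ of width at most $t-1$ and let $P$ be the underlying path. For each component $H_{i,j}$ of each $H_i\in\F$, associate to every $H_{i,j}$-model $M$ the subpath $P_M$ of $P$ formed by the bags meeting $V(M)$; connectivity of $H_{i,j}$ guarantees $P_M$ is a subpath. With $s:=\nu_\F(G)+1$: if for some $i$ every family $\{P_M : M\in\M_{i,j}\}$ (over $j\in[1,c_i]$) contains $sc_i$ pairwise vertex-disjoint subpaths, then Lemma~\ref{lem:intervals} assembles $s$ vertex-disjoint $H_i$-models, a contradiction. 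Hence for each $i$ some component $H_{i,j}$ has a subpath family admitting no $sc_i$-packing, and by the Helly/Gallai duality for intervals on a path there are fewer than $sc_i$ vertices of $P$ hitting all subpaths in that family. Letting $X_i$ be the union of the corresponding bags (at most $sc_it$ vertices), $G-X_i$ has no $H_{i,j}$-minor and hence no $H_i$-minor, and $X_1\cup\cdots\cup X_q$ is a transversal of size at most $sqrt\leq 2qrt\,\nu_\F(G)$. The small object in the paper's argument is a hitting set of bag-intervals, not a model; that distinction is precisely what your approach misses.
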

\begin{proof} 
The claim is trivially true if $\nu_{\F}(G)=0$, so assume $\nu_{\F}(G) \geq 1$. 
Let $\F=\{H_{1}, \dots, H_{q}\}$. For each $i \in [1,q]$, let $H_{i, 1}, \dots, H_{i, c_{i}}$ denote
the components of $H_{i}$, where $c_{i} \leq r$. 
Let $s:= \nu_{\F}(G) + 1$. 
Let $B_{1}, \dots, B_{p}$ denote a path decomposition of $G$ of width at most $t-1$. 
Let $P$ denote the path on $p$ vertices with vertex set $\{B_{1}, \dots, B_{p}\}$ (in order). 

For each $i\in [1,q]$ and $j \in [1, c_{i}]$, let $\M_{i,j}$ be the set of $H_{i,j}$-models 
in $G$. Observe that for each $M \in \M_{i,j}$ we have that $V(M)$ induces a connected subgraph of
$G$, since $H_{i,j}$ is connected; thus vertices in $V(M)$ appear in consecutive bags of the path decomposition, and hence $M$ defines a corresponding subpath $P_{M}$ of $P$.  
Notice that a sufficient condition for two models $M \in \M_{i,j}$ and $M' \in \M_{i',j'}$ 
being vertex-disjoint is that $P_{M}$ and $P_{M'}$ are vertex-disjoint; this will be used below.  

First suppose that there exists an index $i\in [1,q]$ such that for each $j \in [1, c_{i}]$, 
there are $sc_{i}$ pairwise vertex-disjoint paths in $\{P_{M}: M \in \M_{i,j}\}$.
Then by Lemma~\ref{lem:intervals} (with $m=c_{i}$, $k=sc_{i}$ and $x_{1}=\cdots=x_{m}=s$), 
one can find $s$ $H_{i,j}$-models $M_{i,j}^{1}, \dots,  M_{i,j}^{s}$ in $\M_{i,j}$ 
for each $j \in [1, c_{i}]$ such that 
$M_{i,1}^{1}, \dots,  M_{i,1}^{s}, M_{i,2}^{1}, \dots,  M_{i,2}^{s}, \dots,
M_{i,c_{i}}^{1}, \dots,  M_{i,c_{i}}^{s}$ are all pairwise vertex-disjoint. 
In particular, $G$ has $s$ vertex-disjoint $H_{i}$-models, which implies 
$\nu_{\F}(G) \geq s$, a contradiction. Hence there is no such index $i$. 

For each $i\in [1,q]$, consider an index $j \in [1, c_{i}]$ such that
$\{P_{M}: M \in \M_{i,j}\}$ has no $sc_{i}$ pairwise vertex-disjoint paths. 
Then, as is well-known, one can find a subset of at most (in fact, strictly less than) 
$sc_{i}$ vertices of $P$
that meet all the paths in $\{P_{M}: M \in \M_{i,j}\}$; let $X_{i}$ denote the union 
of the bags in the path decomposition corresponding to such a subset (thus $|X_{i}| \leq sc_{i}t$).  
Observe that $G-X_{i}$ has no $H_{i,j}$-minor, and hence no $H_{i}$-minor. 
Therefore, $X_{1}\cup \cdots \cup X_{q}$ is an $\F$-transversal of $G$ of size
at most $sc_{1}t + \cdots + sc_{q}t \leq sqrt = qrt\cdot\nu_{\F}(G) + qrt \leq 2qrt\cdot\nu_{\F}(G)$. 
\end{proof}

If $\F$ is a set consisting of a single graph $H$, we simple write 
$\nu_{H}(G)$ and $\tau_{H}(G)$ for the invariants $\nu_{\F}(G)$ and $\tau_{\F}(G)$, respectively. 

Suppose that $\F$ is a finite set of graphs containing a forest $F$, and let $T$
be a tree on $|F|$ vertices containing $F$. 
The next lemma allows us to reduce the task of finding a small $\F$-transversal of $G$ 
to that of finding a small $\{T\}$-transversal of $G$. 
This will be used in the proof of Theorem~\ref{thm:main}. 

\begin{lemma}
\label{lem:F_to_tree}
Let $\F$ be a set of $q \geq 1$ graphs, each with at most $r$ components,  
containing a forest $F$ on $t$ vertices. Let $T$ be a tree on $t$ vertices
with $F \subseteq T$. 
Then $$\nu_{T}(G) \leq \nu_{\F}(G) \leq 
\tau_{\F}(G) \leq \tau_{T}(G) +  2qrt\cdot \nu_{\F}(G)$$ for every graph $G$. 
\end{lemma}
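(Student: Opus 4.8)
The plan is to read the chain from left to right, observing first that the middle inequality $\nu_{\F}(G) \leq \tau_{\F}(G)$ is the trivial one noted in the introduction (any $\F$-transversal must meet every subgraph in an $\F$-packing), so only the two outer inequalities require work.

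For the leftmost inequality $\nu_{T}(G) \leq \nu_{\F}(G)$, I would show that every $T$-packing in $G$ is also an $\F$-packing. Since $F \subseteq T$, the forest $F$ is a minor of $T$; hence any subgraph of $G$ that contains $T$ as a minor also contains $F$ as a minor, and as $F \in \F$, a collection of vertex-disjoint subgraphs each with a $T$-minor is in particular a collection of vertex-disjoint subgraphs each containing a member of $\F$ as a minor. Taking a maximum $T$-packing gives the claimed bound.

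The rightmost inequality $\tau_{\F}(G) \leq \tau_{T}(G) + 2qrt\cdot\nu_{\F}(G)$ is the heart of the matter. Let $X$ be a minimum $\{T\}$-transversal of $G$, so $|X| = \tau_{T}(G)$ and $G - X$ has no $T$-minor. Since $T$ is a tree, hence a forest, on $t$ vertices, the contrapositive of Theorem~\ref{thm:quickly_forest} applied to $T$ shows that $G - X$ has pathwidth at most $t-2$, in particular pathwidth strictly less than $t$. I would then apply Lemma~\ref{lem:bounded_pathwidth} to $G - X$ to obtain an $\F$-transversal $Y$ of $G - X$ with $|Y| \leq 2qrt\cdot\nu_{\F}(G - X)$. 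As deleting vertices cannot increase the packing number, $\nu_{\F}(G - X) \leq \nu_{\F}(G)$, so $|Y| \leq 2qrt\cdot\nu_{\F}(G)$. Finally $X \cup Y$ is an $\F$-transversal of $G$, because $G - (X \cup Y) = (G - X) - Y$ contains no member of $\F$ as a minor, giving $\tau_{\F}(G) \leq |X| + |Y| \leq \tau_{T}(G) + 2qrt\cdot\nu_{\F}(G)$.

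The proof is short, and the only point that needs care — the ``main obstacle'', such as it is — is verifying the hypotheses of Lemma~\ref{lem:bounded_pathwidth}: the parameters $q$ and $r$ are given by assumption, and the bound ``pathwidth strictly less than $t$'' for $G-X$ is exactly what Theorem~\ref{thm:quickly_forest} supplies, modulo the off-by-one (``pathwidth at least $t-1$ forces a $T$-minor'' yields ``no $T$-minor forces pathwidth at most $t-2$''). The $2qrt$ factor is then simply inherited from Lemma~\ref{lem:bounded_pathwidth}, and everything else is routine bookkeeping.
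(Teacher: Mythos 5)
Your proof is correct and follows essentially the same route as the paper's: take a minimum $\{T\}$-transversal $X$, note that $G-X$ has no $T$-minor and hence pathwidth at most $t-2$ by Theorem~\ref{thm:quickly_forest}, apply Lemma~\ref{lem:bounded_pathwidth} to get $Y$, and combine. The paper simply labels the two left inequalities ``obvious'' rather than spelling out (as you do, correctly) that every $T$-packing is an $\F$-packing because $F \subseteq T$.
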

\begin{proof}
The inequalities $\nu_{T}(G) \leq \nu_{\F}(G) \leq \tau_{\F}(G)$ are obvious; 
let us show that $\tau_{\F}(G) \leq \tau_{T}(G) +  2qrt\cdot \nu_{\F}(G)$. 
Let $X$ be a $\{T\}$-transversal of $G$ with $|X| = \tau_{T}(G)$. 
Then $G-X$ has no $T$-minor, and hence has pathwidth at most $t-2$ by 
Theorem~\ref{thm:quickly_forest}. 
By Lemma~\ref{lem:bounded_pathwidth}, $G-X$ has an $\F$-transversal $Y$
with $|Y| \leq 2qrt \cdot \nu_{\F}(G-X) \leq 2qrt \cdot \nu_{\F}(G)$. 
It follows that $X\cup Y$ is an $\F$-transversal of $G$ of size at most 
$\tau_{T}(G) +  2qrt \cdot \nu_{\F}(G)$. 
\end{proof}

As mentioned in the introduction, it is not difficult to derive from 
Theorem~\ref{thm:quickly_forest}   
that, when $\F$ is a finite set of graphs containing a forest, 
$\tau_{\F}(G) \in O_{\F}(k \log k)$ where $k= \nu_{\F}(G)$. 
We conclude this section with a proof of this statement, for completeness.  

First we consider the case where $\F$ consists of a single tree.

\begin{lemma}
\label{lem:klogk_tree}
Let $T$ be a tree on $t$ vertices. 
Then $\tau_{T}(G) \leq 3(t+1) k\log_2 ((t+1)k) - t$ 
for every graph $G$ with $\nu_{T}(G)=k$.
\end{lemma}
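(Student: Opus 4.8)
The plan is to prove the bound $\tau_T(G) \le 3(t+1)k\log_2((t+1)k) - t$ by induction on $k = \nu_T(G)$, using a standard divide-and-conquer argument driven by Theorem~\ref{thm:quickly_forest}. The base case is $k = 0$: then $G$ has no $T$-minor, so by Theorem~\ref{thm:quickly_forest} it has pathwidth at most $t-2$, and I can take $X = \emptyset$ (note the claimed bound is $-t \le 0$, consistent with $\tau_T(G) = 0$; for $k=0$ one should interpret the statement as asserting $\tau_T(G)=0$, which is immediate). For the inductive step, suppose $k \ge 1$. The key idea is to find a single bag, of size at most $t-1$, whose removal splits $G$ into pieces that each carry strictly fewer disjoint $T$-minors, so that induction applies with a smaller parameter.

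More precisely, first delete a vertex set to reduce to the bounded-pathwidth situation is \emph{not} quite what we want; instead I would argue as follows. If $\nu_T(G) = k$, consider a maximum collection of $k$ vertex-disjoint $T$-models. Every $T$-model, being connected (as $T$ is a tree), lives in consecutive bags of any path decomposition — but $G$ itself need not have small pathwidth. So I would instead use the following recursive scheme: take any vertex $v$; the graph $G - v$ has $\nu_T(G-v) \le k$. This alone does not decrease the parameter. The standard trick is rather to find a \emph{balanced separator}. Concretely, run the argument of Lemma~\ref{lem:bounded_pathwidth} on a hypothetical path decomposition — but since $G$ may have large pathwidth, first observe: either $G$ has a $T$-model (and then, removing nothing yet, we recurse), or... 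Let me reorganize. The cleanest route: apply Theorem~\ref{thm:quickly_forest} contrapositively in a \emph{local} fashion. Since this is getting intricate, the honest plan is the one below.

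Here is the argument I would actually write. Induct on $k$. If $k = 0$, done as above. If $k \ge 1$, then $G$ has a $T$-minor, so $G$ has pathwidth at least $t-1$; take a path decomposition, or rather proceed as follows. Let $\M$ be the family of all $T$-models in $G$. Each $M \in \M$ has connected vertex set. I claim there is a set $Z$ of at most $(t-1)$ vertices (one bag) such that both "sides" of $G - Z$ contain fewer than $k$ disjoint $T$-models, \emph{or} there is a small set hitting everything. Formally, I would use the interval/separator structure: consider an optimal path decomposition of the subgraph $H$ of $G$ induced by the union of vertex sets of some maximum packing together with "connecting" structure — but the slick way, matching the $3(t+1)k\log_2$ form with its factor $t+1$ and base-$2$ logarithm, is a binary recursion on bags. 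So: take a path decomposition $B_1,\dots,B_p$ of $G$ — wait, $G$ has no bound on pathwidth.

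So the correct plan, avoiding the circularity: we do \emph{not} decompose $G$. Instead: if $\nu_T(G) = k \ge 1$, greedily pull out one $T$-model $M_0$, and note $\nu_T(G - V(M_0)) \le k - 1$; but $|V(M_0)|$ is unbounded, so this gives no bound. Hence the real content must be a separator argument. I would argue: since $\nu_T(G) = k$, there is no set of $k+1$ disjoint $T$-models, so by a Menger/interval-type argument (the "no $k+1$ disjoint intervals implies a hitting set of size $\le k$" fact used in Lemma~\ref{lem:bounded_pathwidth}, applied now to a decomposition of $G$ restricted appropriately) there is a vertex set $W$ with $|W| \le (t-1)$ — a bag-sized set — that is "central", i.e. such that $G - W$ has two parts $G_1, G_2$ with $\nu_T(G_i) \le \lceil k/2 \rceil$ for each $i$. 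Actually the balanced-separator fact I need is: a graph with $\nu_T = k$ has a set of at most $(t-1)\cdot O(\log k)$ vertices meeting all $T$-models restricted to "the middle", leading to the recursion $\tau_T(G) \le (t-1) + \tau_T(G_1) + \tau_T(G_2)$ with $\nu_T(G_i) \le \lceil k/2\rceil$. Unrolling, $\tau_T(G) \le (t-1)\cdot(\text{number of nodes in a binary recursion tree of depth }\log_2 k) \approx (t-1)\cdot k\log_2 k$ plus lower-order terms, and adjusting constants gives the stated $3(t+1)k\log_2((t+1)k) - t$; the $-t$ absorbs the boundary pathwidth-$(t-2)$ contributions at the leaves.

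\medskip

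\textbf{Main obstacle.} The crux — and the step I expect to be most delicate — is establishing the balanced-separator claim: that a graph $G$ with $\nu_T(G) = k$ admits a vertex set of size $O((t+1)\log k)$, or even just $O(t+1)$ together with a controlled number of recursion levels, whose removal halves the packing number on each side. Since $G$ has no a priori pathwidth bound, one cannot apply Theorem~\ref{thm:quickly_forest} to $G$ directly. The fix is to work with the subgraph spanned by a maximum packing plus the "reach" of all $T$-models, or to iterate the hitting-set argument of Lemma~\ref{lem:bounded_pathwidth}: repeatedly, find a bag-sized hitting set for the "first half" of the relevant models, remove it, and recurse, using that each removal of a $(t-1)$-sized set decreases the number of disjoint models traversing it. Making the bookkeeping yield exactly the factor $3(t+1)\log_2((t+1)k)$ rather than some larger constant times $\log k$ is the routine-but-finicky part; the conceptual obstacle is justifying that one may treat $G$ "as if" it had pathwidth $\le t-1$ along the relevant slices, which is precisely where Theorem~\ref{thm:quickly_forest} (applied to the pieces with no $T$-minor) and the interval lemma (Lemma~\ref{lem:intervals}, for disjointness of models across slices) do their work.
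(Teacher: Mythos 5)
There is a genuine gap, and you almost stumble onto the missing idea but never quite land on it. The key observation that unlocks the whole argument — and the one you explicitly fail to find when you write ``$G$ has no bound on pathwidth'' and ``one cannot apply Theorem~\ref{thm:quickly_forest} to $G$ directly'' — is that $\nu_T(G) = k$ \emph{does} force $G$ to have bounded pathwidth, just with a bound depending on $k$: the disjoint union of $k+1$ copies of $T$ is a forest on $t(k+1)$ vertices, so by Theorem~\ref{thm:quickly_forest} any graph of pathwidth at least $t(k+1)-1$ contains it as a minor and hence has $k+1$ disjoint $T$-models. Thus a graph with $\nu_T(G)=k$ has pathwidth at most $t(k+1)-2$. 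This immediately gives you a path decomposition of $G$ itself to work with, removing the circularity that stalls your whole plan. All your proposed fixes (working with ``the subgraph spanned by a maximum packing plus the reach of all $T$-models'', iterating a hitting-set argument, applying Theorem~\ref{thm:quickly_forest} ``locally'') are attempts to route around a difficulty that in fact doesn't exist.

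Once you have this, the paper's argument is a clean binary recursion: take a width-$(t(k+1)-2)$ path decomposition with $|B_i \bigtriangleup B_{i+1}|=1$, define $L_i$ and $R_i$ as the parts strictly left/right of $B_i$, and note $\nu_T(L_i)+\nu_T(R_i)\le k$ and that $\nu_T(L_i)$ changes by at most $1$ per step, so some bag $B_j$ satisfies $\nu_T(L_j)\le\lceil k/2\rceil$ and $\nu_T(R_j)\le\lfloor k/2\rfloor$. Remove $B_j$, recurse on both sides. A secondary error in your sketch: the separator at each level has size up to $t(k+1)-1 = O(tk)$, not the ``bag-sized'' $t-1$ you assert; the bag size shrinks at deeper recursion levels (because the residual $\nu_T$ shrinks), and it is precisely this shrinkage that makes the total work out to $O(tk\log k)$ rather than $O(tk\log^2 k)$ or worse. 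Finally, the base case is $k=1$, not $k=0$: the bound $3(t+1)k\log_2((t+1)k)-t$ is meaningless at $k=0$ (the logarithm is undefined), and the $-t$ correction term is there to make the $k=1$ base case and the inductive arithmetic close, not to ``absorb boundary pathwidth-$(t-2)$ contributions''.
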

\begin{proof}
The proof is by induction on $k$.
If the pathwidth of $G$ is at least $t(k+1) - 1$, then $G$
contains every forest on at most $t(k+1)$ vertices as a minor, and thus contains 
$k+1$ disjoint copies of $T$ as a minor. This implies $\nu_{T}(G) \geq k + 1$, a contradiction. 
Hence, $G$ has pathwidth at most $t(k+1) - 2$. 

Let $B_{1}, \dots, B_{p}$ be a path decomposition of $G$ of width 
at most $t(k+1) - 2$. By modifying the decomposition if necessary, we may assume
that $|B_{i} \bigtriangleup B_{i+1}| = 1$ for each $i \in [1, p-1]$. 

For each $i\in [1,p]$, let $L_{i} := G[(B_{1} \cup \cdots \cup B_{i-1}) - B_{i}]$ and
$\l_{i} := \nu_{T}(L_{i})$, and similarly  
let $R_{i} := G[(B_{i+1} \cup \cdots \cup B_{p}) - B_{i}]$ and 
$r_{i} := \nu_{T}(R_{i})$. Observe that $\l_{i} + r_{i} \leq k$, 
since $L_{i}$ and $R_{i}$ are vertex-disjoint subgraphs of $G$. 
Also, since $|B_{i} \bigtriangleup B_{i+1}| = 1$, we have
$\l_{i+1} \in \{ \l_{i}, \l_{i} + 1\}$ for each $i \in [1, p-1]$. 
Thus there exists an index $j\in [1,p]$ such that 
$\l_{j} \leq \left\lceil \frac{k}{2} \right\rceil$
and $r_{j} \leq \left\lfloor \frac{k}{2} \right\rfloor$. 

For the base case of the induction, namely $k=1$, we have $\l_{j} \leq 1$ and $r_{j}= 0$. 
If $\l_{j} = 0$ then $Z:=B_{j}$ is an $\{T\}$-transversal of $G$
of size at most $t(k+1) - 1$. 
Otherwise, $\l_{j} = 1$ and $j > 1$ (since $\l_{1} = 0$).  
We may further assume that $j$ is chosen so that $\l_{j - 1}= 0$.
Then $Z := B_{j-1} \cup B_{j}$ is an $\{T\}$-transversal of $G$
of size at most $t(k+1)$ (since $|B_{j-1} \bigtriangleup B_{j}| = 1$). 
Thus in each case the transversal $Z$ has size at most
$t(k+1) = 2t \leq 3(t+1)\log_2 (t+1) - t$.

For the inductive step, assume $k \geq 2$. 
By induction there are $\{T\}$-transversals $X$ and $Y$ of $L_{j}$ and $R_{j}$, respectively, 
such that $|X| \leq 3(t+1) \left\lceil \frac{k}{2} \right\rceil \log_2 \left((t+1) \left\lceil \frac{k}{2} \right\rceil\right) - t$
and $|Y| \leq 3(t+1) \left\lfloor \frac{k}{2} \right\rfloor \log_2 \left((t+1) \left\lfloor \frac{k}{2} \right\rfloor\right) - t$. 
Then $X \cup Y \cup B_{j}$ is an $\{T\}$-transversal of $G$
of size at most
\begin{align*}
&\quad\; 3(t+1) \left\lceil \tfrac{k}{2} \right\rceil \log_2 \left((t+1) \left\lceil \tfrac{k}{2} \right\rceil\right) - t
+ 3(t+1) \left\lfloor \tfrac{k}{2} \right\rfloor \log_2 \left((t+1) \left\lfloor \tfrac{k}{2} \right\rfloor\right) - t 
+ t(k+1) -1 \\
&\leq 3(t+1)k \log_{2} (t+1) 
+ 3(t+1)k \log_{2} \left(\tfrac{k+1}{2} \right)
+ tk - t -1 \\
&= 3(t+1)k \log_{2} (t+1) 
+ 3(t+1)k \log_{2} (k+1)
- 2tk - 3k - t -1 \\
&\leq 3(t+1)k \log_{2} (t+1) 
+ 3(t+1)k \log_{2} \left( \tfrac{3k}{2}\right)
- 2tk - 3k - t -1 \\
& \leq 3(t+1)k \log_{2}((t+1)k) - t, 
\end{align*}
as desired.
\end{proof}

Combining Lemmas~\ref{lem:F_to_tree} and~\ref{lem:klogk_tree}, 
we obtain the aforementioned $O_{\F}(k \log k)$ bound.   

\begin{proposition}
\label{prop:log}
Let $\F$ be a set of $q \geq 1$ graphs, each with at most $r$ components,  
containing a forest on $t$ vertices. 
Then $\tau_{\F}(G) \leq 3(t+1) k\log_2 ((t+1)k) + 2qrt k - t$ 
for every graph $G$ with $\nu_{\F}(G)=k$.
\end{proposition}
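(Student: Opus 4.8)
The plan is simply to chain Lemmas~\ref{lem:F_to_tree} and~\ref{lem:klogk_tree}. First I would fix a forest $F \in \F$ on $t$ vertices and choose a tree $T$ on $t$ vertices with $F \subseteq T$; such a $T$ exists because a forest can always be completed to a tree on the same vertex set by adding edges between its components. As in Lemma~\ref{lem:klogk_tree}, I would assume $k \geq 1$ (for $k = 0$ we have $\tau_{\F}(G) = 0$ and the statement is read as trivial).

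Next I would invoke Lemma~\ref{lem:F_to_tree} with this $\F$ and $T$, which yields
$$\nu_{T}(G) \leq \nu_{\F}(G) \leq \tau_{\F}(G) \leq \tau_{T}(G) + 2qrt\cdot\nu_{\F}(G).$$
In particular $\nu_{T}(G) \leq k$ and $\tau_{\F}(G) \leq \tau_{T}(G) + 2qrtk$, so it remains to show $\tau_{T}(G) \leq 3(t+1)k\log_2((t+1)k) - t$. If $\nu_{T}(G) = 0$, then $G$ has no $T$-minor, hence $\tau_{T}(G) = 0$, and the bound holds since $3(t+1)k\log_2((t+1)k) - t \geq 3(t+1)\log_2(t+1) - t \geq 0$ whenever $t, k \geq 1$. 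Otherwise I would set $k' := \nu_{T}(G)$, note $1 \leq k' \leq k$, and apply Lemma~\ref{lem:klogk_tree} to obtain $\tau_{T}(G) \leq 3(t+1)k'\log_2((t+1)k') - t$; since the map $x \mapsto 3(t+1)x\log_2((t+1)x) - t$ is nondecreasing on $[1,\infty)$, this is at most $3(t+1)k\log_2((t+1)k) - t$.

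Combining these bounds gives $\tau_{\F}(G) \leq 3(t+1)k\log_2((t+1)k) - t + 2qrtk$, which is the asserted inequality. I do not expect any genuine obstacle here: the proof is a short combination of the two lemmas, and the only points needing a moment's care are the elementary facts that a forest on $t$ vertices lies in some tree on $t$ vertices, that the $k\log k$ expression is monotone in $k$, and that the degenerate case $\nu_{T}(G) = 0$ is harmless.
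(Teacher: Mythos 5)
Your proposal is correct and follows exactly the route the paper indicates (the paper simply states that the proposition follows by combining Lemmas~\ref{lem:F_to_tree} and~\ref{lem:klogk_tree}, leaving the chaining implicit). The small extra care you take — handling the degenerate case $\nu_T(G)=0$ and noting the monotonicity of $x\mapsto 3(t+1)x\log_2((t+1)x)-t$ so that $\nu_T(G)\le k$ can be used — is exactly the bookkeeping the paper leaves to the reader.
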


\section{A Reduction Operation}
\label{sec:reduction}

First we need to introduce some definitions. 
A {\DEF rooted graph} is a pair $(G, R)$ where $G$ is a graph and 
$R = (v_1, \dots, v_k)$ is a (possibly empty) ordered subset of vertices of $G$ called {\DEF roots}. 
The notions of minors and models generalize in a natural way to rooted graphs: 
A rooted graph $(H, R')$ with $R'=(w_1, \dots, w_\l)$ is a {\DEF minor}
of $(G, R)$ if $k=\l$ and there exists a collection $\{V_u: u \in V(H)\}$ of
disjoint vertex subsets of $G$ (the {\DEF branch sets}), each inducing a connected subgraph in $G$, such that $v_i \in V_{w_i}$ for each $i \in [1,k]$, and there exists
an edge between $V_{u}$ and $V_{v}$ in $G$ for every $uv \in E(H)$. 
The collection $M:=\{V_u: u \in V(H)\}$
is called a {\DEF model} of $(H, R')$ in $(G, R)$.  We write $V(M)$ to denote the set
$\cup_{u \in V(H)} V_u$ of vertices of the model $M$.  

Isomorphism between rooted graphs is defined in the expected way (the $i$-th root of the first graph
is required to be mapped to the $i$-th root of the second). 
We write $R' \subseteq R$ for an ordered subset
of an ordered set $R$ (thus, the ordering of $R'$ is 
consistent with the ordering of $R$). 
Also, if $X$ is an (unordered) set then 
$R-X$ is ordered as in $R$. 

For our reduction operation we will need to keep track of rooted minors
of bounded sizes of a given rooted graph $(G,R)$.  
For an integer $q\geq1$, the {\DEF
 $q$-folio} of $(G, R)$ is the $2^{|R|}$-tuple which, for each set $R' \subseteq R$,  
 records the set of all minors $(H, R')$ of $(G, R')$ with $|H| \leq q$ 
 (keeping one member per isomorphism class). 
For integers $p \geq 0$ and
$q\geq 1$, the {\DEF $p$-deletion $q$-folio} of $(G, R)$ is the $(p+1)$-tuple which, 
for each $i\in [0,p]$, records for each $X \subseteq R$ with $|X| \leq i$ 
the set of all $q$-folios of $(G - (X \cup Y), R - X)$ taken over all subsets
$Y \subseteq V(G) - R$ with $|X| + |Y| = i$. 
(Informally speaking, we record the different $q$-folios obtained by deleting $i$ vertices 
in or outside the set of roots, for each $i\in [0,p]$.)

The following lemma on $p$-deletion $q$-folios is an 
adaptation of Lemma 2.2 in~\cite{GroheFPT}, its proof relies on 
standard monadic second-order logic techniques.  

\begin{lemma}
\label{lem:MSO}
Let $p, r, z \geq 0$ and $q \geq 1$. Then 
there exists a computable function $g(p,q,r,z)$ such that, for every rooted graph $(G, R)$ 
such that $G$ has treewidth at most $z$ and $|R|=r$, 
there exists a rooted graph $(G', R)$ with $|G'| \leq g(p,q,r, z)$
and such that $(G, R)$ and $(G', R)$ have the same 
$p$-deletion $q$-folio.
\end{lemma}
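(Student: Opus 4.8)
The plan is to prove Lemma~\ref{lem:MSO} by a standard compactness argument in monadic second-order logic over graphs of bounded treewidth, essentially a parametrised version of the method used for Lemma~2.2 in~\cite{GroheFPT}. First I would observe that for fixed $p$, $q$ and $r$ there are only finitely many possible values of the $p$-deletion $q$-folio: a $q$-folio of a rooted graph with at most $r$ roots is a tuple indexed by the $2^r$ ordered subsets $R'\subseteq R$, each entry being a set of isomorphism classes of rooted graphs on at most $q$ vertices, and there are only finitely many such classes; likewise a $p$-deletion $q$-folio is a $(p+1)$-tuple of finite sets of $q$-folios. Denote by $\Phi$ this finite set of all possible $p$-deletion $q$-folios. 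The key point is that for each fixed $\varphi\in\Phi$, the class of rooted graphs $(G,R)$ (with $|R|=r$) whose $p$-deletion $q$-folio equals $\varphi$ is definable in counting monadic second-order logic MSO$_2$: one can write, for a fixed number $r$ of free vertex variables representing the roots, a formula asserting the existence or non-existence of the relevant deleted vertex sets $X\cup Y$ and, for each, of branch-set partitions witnessing (or forbidding) each bounded-size rooted minor. All quantification over deleted sets and over branch sets ranges over vertex sets of bounded cardinality or over partitions into a bounded number of connected classes, both of which are MSO$_2$-expressible.

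Next I would invoke the fact that any MSO$_2$-definable property of graphs of treewidth at most $z$ depends only on the value of a computable graph invariant taking finitely many values — concretely, two rooted graphs $(G_1,R_1)$ and $(G_2,R_2)$ of treewidth at most $z$ that are MSO$_2$-equivalent up to the quantifier rank needed to express membership in every class $\{(G,R): p\text{-deletion }q\text{-folio}=\varphi\}$, $\varphi\in\Phi$, have the same $p$-deletion $q$-folio. By the theory of tree automata for MSO on bounded-treewidth graphs (Courcelle's theorem and its analysis of the congruence induced by a formula), the number of such MSO$_2$-types is finite and bounded by a computable function of $z$, $r$ and the quantifier rank, and each type is attained by some rooted graph of bounded size: indeed, running the finite tree automaton that recognises the conjunction of these formulas, one can, whenever the input tree decomposition has two nodes in the same automaton state along a root-to-leaf path, contract the subtree between them, strictly decreasing the size while preserving the automaton's behaviour and hence the $p$-deletion $q$-folio. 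Iterating this pumping step produces a rooted graph $(G',R)$ of size bounded by a computable function $g(p,q,r,z)$ with the same $p$-deletion $q$-folio as $(G,R)$; taking, for each attainable type, a smallest representative and letting $g$ be the maximum of their sizes (a finite, computable quantity) gives the claimed bound.

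The main obstacle, and the place where care is needed, is writing down the MSO$_2$ formula for ``the $p$-deletion $q$-folio of $(G,R)$ equals $\varphi$'' and checking it really is MSO$_2$. The subtleties are: (i) the folio records rooted minors with a fixed correspondence of roots, so the branch-set partition must be constrained to place the $i$-th root of $G$ (or of $G-X$) into the branch set of the $i$-th root of $H$ — this is fine since there are boundedly many roots, each named by a free variable; (ii) for each deleted set size $i\le p$ we must quantify existentially over $X\subseteq R$ and $Y\subseteq V(G)-R$ with $|X|+|Y|=i$, which is a bounded block of first-order vertex quantifiers, and then assert that the $q$-folio of the resulting rooted graph is a prescribed finite set, which requires asserting both the presence of each minor in that set and the absence of every bounded-size rooted minor not in it; (iii) the non-membership assertions are universally quantified over branch-set partitions into at most $q$ connected parts, again MSO$_2$-expressible since $q$ is fixed. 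Once one is satisfied that all of this fits into a single MSO$_2$ sentence of computable quantifier rank (depending only on $p$, $q$, $r$), the remainder is the routine automaton-pumping argument sketched above, and computability of $g$ follows from the effectiveness of the translation from MSO$_2$ formulas to tree automata on graphs of treewidth at most $z$.
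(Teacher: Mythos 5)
Your proposal is correct and follows essentially the same template as the paper's proof: both observe that the set of possible $p$-deletion $q$-folios is finite, express the property of having a given folio as an MSO sentence with the roots as free parameters, and invoke the small-model property for MSO on graphs of bounded treewidth to obtain the computable bound $g$. The only difference is stylistic: you sketch the underlying tree-automaton pumping argument for the small-model property explicitly, whereas the paper cites it as a black box (via a theorem of Thatcher and Wright~\cite{TW68}, as explained in Lemma~2.2 of~\cite{GroheFPT}) and additionally uses Seese's theorem~\cite{S91} to enumerate which folios are realizable at treewidth at most $z$.
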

\begin{proof}
Fix an ordered set $R$ with $|R| = r$. 
Let $\mathcal{G}(R, q)$ denote the set of (non-isomorphic) rooted graphs $(G, R)$ 
with $|G| \leq q$. Clearly $|\mathcal{G}(R, q)|$ is bounded from above by a function
of $r$ and $q$. 

Let $\mathcal{S}(R, q)$ denote the set of vectors $s$ having one entry $s(R')$ per subset 
$R'\subseteq R$ and such that $s(R') \subseteq \mathcal{G}(R', q)$.
Again, $|\mathcal{S}(R, q)|$ is bounded from above by a function
of $r$ and $q$. 
By definition, for each rooted graph $(G, R)$ 
there is exactly one vector in $\mathcal{S}(R, q)$ that encodes 
the $q$-folio of $(G, R)$. (Note that, on the other hand, some vectors 
$s\in \mathcal{S}(R, q)$ might not correspond 
to the $q$-folio of any rooted graph $(G, R)$.)

Let $\mathcal{D}(R, q, p)$ denote the set of vectors $d=(D_{0}, \dots, D_{p})$ such that, 
for each $i\in [0,p]$, $D_{i}$ has one entry $D_{i}(X)$ for each $X \subseteq R$ with $|X| \leq i$, 
and $D_{i}(X) \subseteq \mathcal{S}(R - X, q)$ for each such set $X$. 
The size of $\mathcal{D}(R, q, p)$ is bounded from above by a function
of $r$, $q$, and $p$. 
By the definition of $\mathcal{D}(R, q, p)$, 
the $p$-deletion $q$-folio of a rooted graph $(G, R)$ 
is encoded by exactly one vector in $\mathcal{D}(R, q, p)$. 

It is well-known that the property that a fixed graph $H$ is a minor
of a graph $G$ can be expressed by a monadic second-order logic (MSO) sentence $\varphi_{H}$ 
(see Grohe~\cite{GroheSurvey} for an introduction to monadic second-order logic). 
Similarly, for each $s \in \mathcal{S}(R, q)$ one can write an MSO-sentence $\varphi_{s}$ 
such that a rooted graph $(G, R)$ satisfies $\varphi_{s}$ if and only if its $q$-folio equals $s$. 
More generally, for each $d\in \mathcal{D}(R, q, p)$ one can express by  
an MSO-sentence $\varphi_{d}$ the property that $d$ is the $p$-deletion $q$-folio of $(G, R)$.  

By Seese's theorem~\cite{S91}, there is an algorithm which given 
an MSO-sentence $\varphi$ and an integer $t$ decides whether there exists a graph of treewidth at most
$t$ satisfying $\varphi$. It is also known that there is a computable function $h(\varphi, t)$ such that 
if an MSO-sentence $\varphi$ 
is realizable by (or more accurately, admits a model consisting of) a graph of treewidth 
at most $t$, then there is such a graph with at most $h(\varphi, t)$ vertices. 
This can be derived using (among others) a theorem of Thatcher and Wright~\cite{TW68}, as
explained in the proof of Lemma 2.2 in~\cite{GroheFPT}.

Now clearly the set $\mathcal{D}(R, q, p)$ is computable. 
It follows that,  for each 
$d\in \mathcal{D}(R, q, p)$, we can use Seese's algorithm to 
compute the subset $\mathcal{D}_{z} \subseteq \mathcal{D}(R, q, p)$ of vectors 
$d \in \mathcal{D}(R, q, p)$ such that there exists a rooted graph $(G, R)$ of treewidth 
at most $z$ that satisfies $\varphi_{d}$. 
Moreover, for each $d\in \mathcal{D}_{z}$, 
we can compute a rooted graph $(G', R)$ whose $p$-deletion $q$-folio equals $d$ and such that  
$|G'| \leq h(\varphi_{d}, z)$. 
The lemma follows by setting $g(p, q, r, z) := \max\{ h(\varphi_{d}, z): d \in \mathcal{D}_{z}\}$.  
\end{proof}

A {\DEF separation} of a graph $G$ is a pair $(G_{1}, G_{2})$
of two induced subgraphs of $G$ such that $G = G_{1} \cup G_{2}$, its {\DEF order}
is $|V(G_{1}) \cap V(G_{2})|$. 
Let us point out that $G_{1} \subseteq G_{2}$ or $G_{2} \subseteq G_{1}$ 
is allowed in this definition, that is,
we do not require $V(G_{1}) - V(G_{2})$ and $V(G_{2}) - V(G_{1})$ to be nonempty. 

\begin{lemma}
\label{lem:reduction}
For every tree $T$ there exists a computable function $b$ such that, for every graph $G$
having a separation $(G_{1}, G_{2})$ of order $t$ with $|G_{1}| \geq b(t)$ such that   
$G_{1}$ has no $T$-minor,  
there exists a graph $G'$ satisfying  $\nu_{T}(G') = \nu_{T}(G)$, $\tau_{T}(G') = \tau_{T}(G)$, 
and $|G'| < |G|$. 
\end{lemma}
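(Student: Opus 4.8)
## Proof Plan for Lemma~\ref{lem:reduction}

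The plan is to exploit the fact that $G_1$ has no $T$-minor, hence by Theorem~\ref{thm:quickly_forest} has bounded pathwidth (at most $|T|-2$), and therefore bounded treewidth. I would set things up so that the reduction amounts to replacing $G_1$ by a small graph $G_1'$ that behaves identically to $G_1$ with respect to building $T$-models in $G$ and with respect to transversals. The key observation is that whenever a subgraph of $G$ containing a $T$-model meets both sides of the separation, the way it ``uses'' $G_1$ is entirely captured by which rooted minor of $(G_1, R)$ (with $R$ the ordered separator $V(G_1)\cap V(G_2)$, $|R|=t$) it realizes, where the relevant minors have bounded size since $|T|$ is a constant. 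Similarly, a transversal $X$ of $G$ restricted to $G_1$ is some set of vertices whose deletion kills certain rooted minors; since we only ever need to understand the effect of deleting up to $|T|$ vertices inside $G_1$ on the available rooted $T$-minors, the $p$-deletion $q$-folio of $(G_1, R)$ for $p = q = |T|$ carries all the information we need.

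The concrete steps I would carry out are as follows. \emph{Step 1:} Let $R = (v_1,\dots,v_t)$ be an arbitrary ordering of $V(G_1)\cap V(G_2)$ and set $p := |T|$, $q := |T|$. Since $G_1$ has no $T$-minor, $G_1$ has pathwidth at most $|T|-2$, hence treewidth at most some constant $z = z(T)$. \emph{Step 2:} Apply Lemma~\ref{lem:MSO} to the rooted graph $(G_1, R)$ to obtain a rooted graph $(G_1', R)$ with $|G_1'| \le g(p,q,t,z) =: b'(t)$, having the same $p$-deletion $q$-folio as $(G_1, R)$. If $|G_1| \ge b(t) := b'(t) + 1$, then $|G_1'| < |G_1|$. \emph{Step 3:} Form $G'$ by gluing $G_1'$ and $G_2$ along $R$ (that is, take the disjoint union of $G_1'$ and $G_2$ and identify the $i$-th root of $G_1'$ with $v_i$ for each $i$); then $|G'| = |G_1'| + |G_2| - t < |G_1| + |G_2| - t = |G|$. \emph{Step 4:} Show $\nu_T(G') = \nu_T(G)$ and $\tau_T(G') = \tau_T(G)$.

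For Step 4, the crux is a claim of the following shape: for every vertex set $W \subseteq V(G_2) \setminus R$ and every ordered subset $R' \subseteq R$, the $q$-folio of $(G_1 \cup (G_2 - W), R')$ depends on $(G_1, R)$ only through its $q$-folio, and more generally the same holds after deleting a bounded number of further vertices inside $G_1$ — precisely the information recorded by the $p$-deletion $q$-folio. One direction: any $T$-model in $G$ either lies entirely in $G_2$ (unaffected by the swap, since $G_2$ is untouched), or lies entirely in $G_1$ (impossible, as $G_1$ has no $T$-minor), or meets both sides, in which case it induces a rooted minor $(H, R')$ of $(G_1, R')$ with $|H| \le |T|$ that, combined with the part of the model in $G_2$, witnesses $T$ as a minor; since $(G_1', R)$ has the same $q$-folio, the same rooted minor is available in $(G_1', R')$, and one reassembles a $T$-model in $G'$. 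Running this argument for a family of $k$ pairwise vertex-disjoint $T$-models simultaneously gives $\nu_T(G) \le \nu_T(G')$, and by symmetry equality. For transversals: given an optimal $\{T\}$-transversal $X$ of $G$, write $X = X_1 \cup X_2$ with $X_1 = X \cap (V(G_1)\setminus R)$, $X_R = X \cap R$, $X_2 = X \cap (V(G_2)\setminus R)$; one may assume $|X_1| \le |T|$ (otherwise deleting all of $V(G_1)\setminus R$ — but that set may be huge — instead: one shows $|X_1 \cup X_R| < |T|$ can be assumed on the $G_1$ side because $G_1$ itself has no $T$-minor, so $X$ restricted to a separator-based argument via pathwidth gives a bounded replacement; this is where I'd use the $p$-deletion part of the folio with $p = |T|$). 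The $p$-deletion $q$-folio of $(G_1,R)$ then tells us that deleting the ``image'' of $X_1$ inside $G_1'$ (a set of the same size, guaranteed to exist because the folios match) produces a rooted graph with the same $q$-folio as $(G_1 - X_1, R - X_R)$, so that $X' := (\text{image of } X_1) \cup X_R \cup X_2$ is a $\{T\}$-transversal of $G'$ of the same size; symmetry gives equality.

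The main obstacle I anticipate is making precise and correct the bound on how many vertices of an optimal transversal can be assumed to lie \emph{strictly inside} $G_1$, and correspondingly choosing $p$ large enough (but still constant) that the $p$-deletion $q$-folio records exactly what is needed — namely, one must argue that if a transversal uses many vertices inside $G_1$, then since $G_1$ has no $T$-minor to begin with, there is a ``canonical'' small substitute transversal on the $G_1$ side (of size $< |T|$, obtained from a path decomposition of $G_1$ of width $\le |T|-2$), so that $p = |T|$ suffices. A secondary technical point is verifying that the folio-equivalence composes correctly: the $q$-folio of $(G_1, R') $ after deleting vertices must determine the $q$-folio of the glued graph $(G_1 \cup G_2', R')$, which requires a routine but careful lemma saying that rooted $T$-minors of a graph glued along $R'$ decompose into rooted $T$-minors of the two pieces sharing boundary — essentially the standard fact that minors of bounded size in a graph with a small separator are controlled by the folios of the two sides. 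Everything else (pathwidth-to-treewidth bound, the gluing construction, counting $|G'| < |G|$) is routine.
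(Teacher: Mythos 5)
Your overall strategy matches the paper's: replace $G_1$ by a bounded-size rooted graph with the same $p$-deletion $q$-folio via Lemma~\ref{lem:MSO}, glue it onto $G_2$, and transfer packings and transversals through the folio correspondence. The gap is in the choice $p = q = |T|$, and it is not cosmetic. For $q$: when a $T$-model $M=\{V_u\}$ crosses the separator $X$, a single branch set $V_u$ meeting $X$ may intersect $V(G_1)$ in several connected pieces, one per vertex of $V_u\cap X$, and each piece must become a distinct vertex of the rooted minor $H^*$ you record --- otherwise you cannot reassemble connected branch sets after swapping to $G_1'$. Already for one model this gives $|H^*|\leq |X|+|T|=t+|T|>|T|$. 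Moreover, to prove $\nu_T(G)\leq\nu_T(G')$ you must encode in a \emph{single} rooted minor the intersections with $G_1$ of all $T$-models in a packing that simultaneously touch $G_1$; since $T$ is connected and $G_1$ is $T$-minor-free, at most $|X|=t$ models can touch $G_1$, each contributing up to $|T|$ interior pieces, so one needs $q\geq t+t|T|=t(|T|+1)$, which is what the paper takes. For $p$: the bound on how many vertices of a minimum transversal $J$ lie inside $G_1$ is $|J\cap V(G_1)|\leq |X|=t$ (because $(J\setminus V(G_1))\cup X$ is also a transversal, as $T$ is connected and $G_1$ is $T$-minor-free), and in the application inside Theorem~\ref{thm:main} the separation order reaches $2|T|-2>|T|$, so $p=|T|$ is genuinely too small; the correct choice is $p=t$.

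A secondary but real issue you wave off: gluing $G_1'$ and $G_2$ by identifying roots keeps all edges of $G_2$ inside $X$. The $q$-folio equality does \emph{not} force $G_1'[R]=G_1[R]$, since rooted minors allow contractions and so do not pin down the induced subgraph on the roots. The paper therefore deletes from $G_2$ any edge between two $X$-vertices that is absent from $G_1'$, so that $G'[X]=G_1'[X]$; without that step the folio of $(G_1',R)$ as it sits inside $G'$ need not match the folio of $(G_1,R)$ as it sits inside $G$, and the back-and-forth correspondence you rely on can fail. This is a small change to your Step~3 but a necessary one.
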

\begin{proof}
Since $G_{1}$ has no $T$-minor, its pathwidth is at most $|T|-2$ by Theorem~\ref{thm:quickly_forest}, 
and hence so is its treewidth.  
We will prove the lemma with $b(t):=g(t,q,t, |T| -2) + 1$, 
where $q:=t(|T|+1)$ and $g$ is the function from 
Lemma~\ref{lem:MSO}. 
Let $X := V(G_1) \cap V(G_2)$ and let $R$ be an arbitrary ordering of $X$. 
Using Lemma~\ref{lem:MSO},  let $(G'_1, R)$ be a rooted graph with the same 
$t$-deletion $q$-folio as $(G_{1}, R)$,  
and with $|G'_1| \leq g(t,q,t, |T| -2) < b(t) \leq |G_1|$. 
Assume without loss of generality that $V(G'_{1}) \cap V(G_{2}) = X$.  
Let $G'$ be the graph obtained from $G'_{1} \cup G_{2}$ by removing every 
edge linking two vertices in $X$ that is included in $G_{2}$ but not in $G'_{1}$. 
(Thus $G[X]$ and $G'[X]$ could possibly be distinct graphs.) 
Then $|G'| < |G|$, and it remains to show that  
$\nu_{T}(G') = \nu_{T}(G)$ and $\tau_{T}(G') = \tau_{T}(G)$.

First we prove that $\nu_{T}(G') = \nu_{T}(G)$ by showing that 
$\nu_{T}(L) \geq \nu_{T}(K)$ for $(K, L) \in \{(G, G'), (G', G)\}$. 
If $K = G$, let $K_{1} := G_{1}$ and $L_{1}:= G'_{1}$, otherwise let  
$K_{1} := G'_{1}$ and $L_{1}:= G_{1}$. 
Consider $k:=\nu_{T}(K)$ vertex-disjoint $T$-models $M_1, \dots, M_k$ in $K$, 
where $M_i = \{V^i_u: u\in V(T)\}$
for each $i \in [1,k]$. 
We may assume that these models are ordered so that 
$V(M_1), \dots, V(M_\l)$ each intersects $V(K_1)$, while none of $V(M_{\l+1}), \dots, V(M_k)$ intersects $V(K_1)$, 
for some index $\l \in [1,k]$. 
Observe that, since $T$ is connected and $K_1$ has no $T$-minor, each of $V(M_1), \dots, V(M_\l)$ must intersect $X$.
It follows that $\l \leq t$. 

For each $i \in [1,\l]$, let $X_i := V(M_i) \cap X$ (thus $X_i$ and $X_j$ are disjoint for $i\neq j$), 
let $A_i$ be the set of vertices $u\in V(T)$ such that $V^i_u$ intersects $X_i$, and, for every $u \in A_i$, let
$s_{u,i} := |V^i_u \cap X_i|$ and  arbitrarily partition $V^i_u \cap V(K_1)$ into 
$s_{u,i}$ parts $W^i_{u,1}, \dots, W^i_{u,s_{u,i}}$, so that each part induces a connected subgraph of $K_1$ and
contains exactly one vertex from $V^i_u \cap X_i$. Also let $B_i$
be the set of vertices $u\in V(T)$ such that $V^i_u$ intersects $V(K_1)$ but not $X_i$ (thus $A_i$ and $B_i$ are disjoint). 
For $u\in B_i$, let $s_{u,i} :=1$ and let $W^i_{u,1} := V^i_u$. 

Let $H^*$ be the graph with one vertex per triple $(i, u, j)$ where $i\in [1, \l], u\in  A_i \cup B_i$, 
and $j\in [1, s_{u,i}]$,
and where two distinct vertices $(i, u, j)$ and  $(i', u', j')$ are adjacent if and only if 
there is an edge in $K_1$ connecting
a vertex of $W^i_{u,j}$ to a vertex of $W^{i'}_{u',j'}$. 
Observe that $V^{i}_{u}$ corresponds to one vertex of $H^{*}$ if 
$V^{i}_{u} \subseteq V(K_{1}) - X$, and to $s_{u}$ vertices of $H^{*}$ otherwise. 
Let $R^*$ be the ordering of $X_1 \cup \cdots \cup X_{\l}$
induced by $R$.  
By the definition of $H^*$, the rooted graph $(H^*, R^*)$ is a minor of $(K_1, R^*)$. 
Since 
$$
|H^*| = \sum_{i=1}^{\l} \sum_{u \in A_i}  s_{u,i} + \sum_{i=1}^{\l} |B_i| 
= \sum_{i=1}^{\l} |X_i| + \sum_{i=1}^{\l} |B_i|
\leq |X| + \l |T|   \leq t + t |T| = q,
$$
$(H^*, R^*)$ is in the  $q$-folio of $(K_1, R)$. (More precisely, the $R^*$-entry of the
$q$-folio of $(K_1, R)$ contains $(H^*, R^*)$.)   
Hence $(H^*, R^*)$ is also 
in the  $q$-folio of $(L_1, R)$, since the $q$-folios of  $(K_1, R)$ and  
$(L_1, R)$ are the same (up to isomorphism, as always).  
Now consider a model of $(H^*, R^*)$ in $(L_1, R^*)$, and let  $Y^i_{u,j}$ denote the branch set 
corresponding to the vertex $(i,u,j)$ of $H^*$. 
Define, for each $i \in [1,k]$ and $u\in V(T)$, the set $Z^i_u$ as follows:
$$
Z^i_u := \left\{
\begin{array}{ll} 
(V^i_u - V(K_1)) \cup Y^i_{u,1} \cup \cdots \cup Y^i_{u,s_{u,i}} & \quad \textrm{if } i \leq \l \textrm{ and } u\in A_i \cup B_i, \\
V^i_u & \quad \textrm{otherwise}.
\end{array}
\right.
$$
For each $i \in [1,k]$, we have that $L[Z^i_u]$ is connected for 
each $u\in V(T)$, which follows from the fact that $K[V^{i}_{u}]$ is connected. 
Also, there exists an edge between $Z^i_u$ and $Z^i_v$ in $L$ for each $uv\in E(T)$.  
Let $M'_i := \{ Z^i_u: u \in V(T)\}$. 
Since  $Z^i_u$ is disjoint from  $Z^j_v$ for distinct $i,j \in [1,k]$ and $u,v\in V(T)$, we deduce
that $M'_1, \dots, M'_k$ are $k$ vertex-disjoint models of $T$ in $L$. 
Hence $\nu_{T}(K) \leq \nu_{T}(L)$, as desired.

Now we show that $\tau_{T}(G') = \tau_{T}(G)$ 
by showing that 
$\tau_{T}(L) \geq \tau_{T}(K)$ for $(K, L) \in \{(G, G'), (G', G)\}$. 
As before, if $K = G$, let $K_{1} := G_{1}$ and $L_{1}:= G'_{1}$, otherwise let  
$K_{1} := G'_{1}$ and $L_{1}:= G_{1}$.

Let $J$ be a minimum-size transversal of $L$, and let $J_1 := J \cap V(L_1)$. 
We have $|J_1| \leq |X| = t$, because $(J - J_1) \cup X$ is also a transversal of $L$, and thus must have size at least that of $J$. 
(Here we use the fact that $L_1$ has no $T$-minor, and that $T$ is connected.) 
Since the $t$-deletion $q$-folios of  $(K_1, R)$ and  $(L_1, R)$ are the same, there exists
a subset $I_1$ of vertices of $K_1$ with $|I_1| = |J_1|$ 
such that the $q$-folios of  $(L_1 - J_1, R - J_1)$ and  $(K_1 - I_1, R - I_1)$ are the same (by definition). 

We claim that the set $I := (J - J_1) \cup I_1$, which has the same size as $J$, 
is a transversal of $K$. 
Arguing by contradiction, assume not, and let $M:=\{ V_u : u\in V(T)\}$ denote a $T$-model in $K - I$. 
Let $A$ be the subset of vertices $u\in V(T)$ such that $V_u$ intersects $X$, and let $B$ be the set of those such that $V_u \subseteq V(K_1) - X$ (thus $A$ and $B$ are disjoint). 
For every $u\in A$, let $s_u := |V_u \cap X|$, and partition $V_u \cap V(K_1)$ into 
$s_{u}$ parts $W_{u,1}, \dots, W_{u,s_{u}}$, so that each part induces a connected subgraph of $K_1$ and
contains exactly one vertex from $V_u \cap X$.
Also let $s_u :=1$ and $W_{u,1} := V_u$ for every $u\in B$.
We define $H^*$ as the minor of $K_1 - I_1$ modeled by the $W_{u,j}$s, exactly as previously: 
$H^*$ has one vertex for every couple $(u, j)$ where $u\in A \cup B$ and $j\in \{1, \dots, s_u\}$, and
two distinct vertices $(u,j)$ and $(u', j')$ of $H^*$ are adjacent if and only if there is an edge between $W_{u,j}$
and $W_{u',j'}$ in $K_1 - I_1$. 

Let $R^*$ be the ordered subset of $R$ induced by the vertices in $V(M) \cap X$. 
Thus $(H^*, R^*)$ is a minor of $(K_1 - I_1, R^{*})$. 
Since $|H^*| = \sum_{u\in A} s_u + |B| \leq |X| + |T| \leq q$, the rooted graph $(H^*, R^*)$
is in the $q$-folio of $(K_1 - I_1, R - I_1)$, and hence also in 
the $q$-folio of $(L_1 - J_1, R - J_1)$. 
Consider a model of $(H^*, R^*)$ in $(L_1 - J_1, R^*)$; let 
$W'_{u,j}$ denote the branch set corresponding to the vertex $(u,j)$ of $H^{*}$ in that model. 
For every $u \in V(T)$, let
$$
V'_u := \left\{
\begin{array}{ll} 
(V_u - V(K_1)) \cup W'_{u,1} \cup \cdots \cup W'_{u,s_{u}} & \quad \textrm{if } u\in A_i \cup B_i, \\
V_u & \quad \textrm{otherwise}.
\end{array}
\right.
$$
It can be checked that $V'_u$ is disjoint from  $V'_v$ for every distinct $u,v\in V(T)$, that
$L[V'_u]$ is connected for 
every $u\in V(T)$, and that there is an edge between $V'_u$ and $V'_v$ in $L$ for every $uv\in E(T)$.  
We deduce that $M' := \{ V'_u: u \in V(T)\}$ is a model of $T$ in $L - J$, a contradiction. 
Therefore, the set $I$ is a transversal of $K$, and $\tau_{T}(K) \leq \tau_{T}(L)$, as claimed. 
\end{proof}

Even though we will not use this fact, we would like to point out that 
Lemma~\ref{lem:reduction} remains true more generally
if the tree $T$ is replaced by a connected planar graph $H$: 

\begin{lemma}
For every connected planar graph $H$ 
there exists a computable function $b$ such that, for every graph $G$
having a separation $(G_{1}, G_{2})$ of order $t$ with $|G_{1}| \geq b(t)$ such that   
$G_{1}$ has no $H$-minor,  
there exists a graph $G'$ satisfying  $\nu_{H}(G') = \nu_{H}(G)$, $\tau_{H}(G') = \tau_{H}(G)$, 
and $|G'| < |G|$. 
\end{lemma}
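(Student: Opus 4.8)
The plan is to run the proof of Lemma~\ref{lem:reduction} essentially verbatim, changing only the single place where Theorem~\ref{thm:quickly_forest} is invoked. In that proof, Theorem~\ref{thm:quickly_forest} serves exactly one purpose: to deduce from ``$G_{1}$ has no $T$-minor'' that $G_{1}$ has bounded treewidth, which is precisely the input that Lemma~\ref{lem:MSO} requires. When $T$ is replaced by a connected \emph{planar} graph $H$, the substitute is the excluded grid theorem of Robertson and Seymour~\cite{RS_five}: since $H$ is planar it is a minor of a large enough grid, so there is a computable constant $z = z(H)$ such that every graph with no $H$-minor has treewidth at most $z$. With this in hand I would set $q := t(|H|+1)$ (note $q \geq |H|$ when $t \geq 1$, so that a $q$-folio detects the presence of an $H$-minor) and $b(t) := g(t,q,t,z) + 1$, with $g$ the function from Lemma~\ref{lem:MSO}. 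Then, exactly as before: let $X := V(G_{1}) \cap V(G_{2})$ with an arbitrary ordering $R$; use Lemma~\ref{lem:MSO} to obtain a rooted graph $(G_{1}', R)$ with the same $t$-deletion $q$-folio as $(G_{1}, R)$ and with $|G_{1}'| \leq g(t,q,t,z) < |G_{1}|$; and form $G'$ from $G_{1}' \cup G_{2}$ by deleting the edges inside $X$ that lie in $G_{2}$ but not in $G_{1}'$, so that $|G'| < |G|$.

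It remains to establish $\nu_{H}(G') = \nu_{H}(G)$ and $\tau_{H}(G') = \tau_{H}(G)$, and here every step of the proof of Lemma~\ref{lem:reduction} survives once $T$ is replaced by $H$, because the only properties of $T$ that proof ever uses are that $T$ is connected and that the $q$-folios of the relevant pieces are preserved. In particular: (i) $H$ connected together with $G_{1}$ and $G_{1}'$ being $H$-minor-free (the latter because $q \geq |H|$ forces equality of $q$-folios to transfer $H$-minor-freeness) implies that any $H$-model meeting $V(G_{1})$ must meet $X$, so at most $t = |X|$ members of a vertex-disjoint family of $H$-models meet $V(G_{1})$; (ii) for the same reason one may add $X$ to any $H$-transversal of a graph containing $G_{1}$, which gives the bound $|J_{1}| \leq |X| = t$ in the transversal argument; (iii) contracting the part of an $H$-model inside $G_{1}$ yields a rooted minor $(H^{*}, R^{*})$ of order at most $|X| + t\,|H| \leq q$ in the packing argument and at most $|X| + |H| \leq q$ in the transversal argument, so $(H^{*}, R^{*})$ lies in the relevant $q$-folio and can be transferred to the other side; and (iv) reassembling a full $H$-model from the pieces $V_{u} - V(K_{1})$ together with the transferred branch sets uses only the vertex set and edge set of $H$, not any structural feature of $T$. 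Thus the two halves of the proof of Lemma~\ref{lem:reduction}, carried out in the order ``packing equality, then transversal equality'', go through word for word with $T$ replaced by $H$.

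Consequently there is no genuinely new difficulty: the combinatorial heart of Lemma~\ref{lem:reduction} never used that $T$ was acyclic, only that it was connected, and the sole place where ``forest'' was needed --- the treewidth bound --- is supplied for planar $H$ by the excluded grid theorem, which is the one external ingredient I would flag as doing the real work. The only bookkeeping point to dispose of at the outset is the degenerate case $t = 0$: then $X = \emptyset$ and $(G_{1}, G_{2})$ is a disjoint union, so $G_{1}$ contributes nothing to $\nu_{H}$ or $\tau_{H}$ and one may simply replace $G_{1}$ by the empty graph.
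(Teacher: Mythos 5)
Your proposal is correct and takes essentially the same approach as the paper: the paper dispatches this variant in a single remark, observing that the excluded grid theorem supplies the treewidth bound in place of Theorem~\ref{thm:quickly_forest} and that $b(t)$ should be redefined accordingly, which is precisely your argument. Your additional verification that the combinatorial halves of the proof of Lemma~\ref{lem:reduction} use only the connectivity of the excluded graph (not acyclicity), together with your handling of the degenerate case $t=0$, simply spells out details the paper leaves implicit.
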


This can be shown using the exact same proof as for Lemma~\ref{lem:reduction}, the only difference
being that $G_{1}$ now has treewidth at most $c_{H}$ for some constant $c_{H}$ (by~\cite{RS_five}), 
and thus the function $b(t)$ should now be defined as $b(t):=g(t,q,t, c_{H}) + 1$.

\section{Proof of Main Theorem}
\label{sec:proof}

Let us start by recalling the overview of the proof of Theorem~\ref{thm:main}  
given in the introduction for the special case where $\F$ consists of a single tree $T$. 
The proof is by induction on $|G|$; 
we show that one can always either apply the reduction operation described 
in Lemma~\ref{lem:reduction} (in which case we are done by induction), or find a $T$-model 
in $G$ of size at most $c$. In the latter case, we delete all vertices of the model and 
apply induction on the remaining graph. 
The key step is thus proving that $T$-models of constant size can be found in $G$ when
the reduction operation cannot be applied. This is achieved by Lemma~\ref{lem:main_pw}, which is 
the main lemma of this section. 

Our basic strategy for finding a $T$-model of constant size when the reduction operation 
cannot be applied to  $G$ is to consider a ball $B$ of large but constant radius around an arbitrary vertex of $G$. 
If all vertices in $B$ have bounded degree then $B$ has bounded size, and
it turns out it is not difficult to show that $B$ must contain $T$ as a minor 
(in which case we found a $T$-model of constant size) because otherwise the reduction operation could have been applied to $G$. However, difficulties arise when $B$ contains vertices 
of high degrees. This is why in Lemma~\ref{lem:main_pw} we do not directly try and find 
a $T$-model of constant size, but instead look for a subgraph of $G$ of constant size having 
some specified pathwidth $t$. This is enough, since by Theorem~\ref{thm:quickly_forest} 
every graph with pathwidth at least $|T|-1$ contains $T$ as a minor, and allows 
us to set up an inductive argument that handles vertices of high degree in $G$. 

First we need to introduce a few definitions and results. Lemmas~\ref{lem:marked}, 
\ref{lem:refinement}, 
\ref{lem:tree}, 
\ref{lem:pathwidth_rooted_minors}, and
\ref{lem:pw_increase} below 
are short lemmas that will be used in the proof of Lemma~\ref{lem:main_pw}. 
 
Given a graph $G$ and $t \in \N$, a {\DEF \tsep{t}} of $G$ is a separation $(G_1, G_2)$ of $G$ 
such that $G_1$ has a path decomposition $B_1, \dots, B_p$ of width at most $t$ with $V(G_1) \cap V(G_2)=B_1 \cup B_p$. 
Notice that the order of such a separation is at most $2t+2$. 

\begin{lemma} 
\label{lem:marked}
Let $G$ be a graph of pathwidth at most $t$ with $k \geq 0$ marked vertices. 
Then there is a {\tsep{t}} $(G_1, G_2)$ of $G$  with 
$|G_1| \geq \frac{|G| - k(t+1)}{k+1}$ such that no vertex in $V(G_1) - V(G_2)$ is marked. 
\end{lemma}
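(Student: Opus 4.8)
The plan is to carve $G_1$ out of a window of a path decomposition of $G$, choosing the window so that its two end bags form the separator $V(G_1)\cap V(G_2)$ and so that every marked vertex meeting the window lies in one of those two end bags.

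Concretely, I would fix a path decomposition $B_1,\dots,B_p$ of $G$ of width at most $t$; if $p\le 1$ (so $|G|\le t+1$) we take $G_1:=G$ and are done, so assume $p\ge 2$. For each marked vertex, fix one bag containing it, and let $C\subseteq[1,p]$ be the set of indices of the chosen bags, so $|C|\le k$. Sort $C\cup\{1,p\}$ as $q_0=1<q_1<\dots<q_s=p$, where $s\le k+1$; the consecutive pairs cut $[1,p]$ into the $s\le k+1$ \emph{blocks} $[q_{j-1},q_j]$, $j\in[1,s]$, and consecutive blocks share one endpoint index. Every vertex of $G$ lies in a bag whose index belongs to some block, so $\sum_{j=1}^{s}\bigl|\bigcup_{i\in[q_{j-1},q_j]}B_i\bigr|\ge|G|$, and by averaging some block $[a,b]$ satisfies $\bigl|\bigcup_{i\in[a,b]}B_i\bigr|\ge|G|/(k+1)\ge(|G|-k(t+1))/(k+1)$. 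I would then set $G_1:=G\bigl[\bigcup_{i\in[a,b]}B_i\bigr]$ and $G_2:=G\bigl[(V(G)\setminus V(G_1))\cup B_a\cup B_b\bigr]$.

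It then remains to verify three things. First, that $(G_1,G_2)$ is a \tsep{t} with $V(G_1)\cap V(G_2)=B_a\cup B_b$: the sequence $B_a,\dots,B_b$ is a path decomposition of $G_1$ of width at most $t$ with first and last bags $B_a,B_b$, and $G=G_1\cup G_2$ because any edge $uv$ with, say, $u\notin V(G_1)$ has $u\in V(G_2)$ and forces $v\in B_a\cup B_b\subseteq V(G_2)$ (as $v$ shares a bag with $u$, whose index is $<a$ or $>b$, while the bags containing $v$ form an interval that also meets $[a,b]$, hence contains $a$ or $b$). Second, the size bound, which is exactly the averaging step above. Third — and this is the only point that needs care — that no vertex of $V(G_1)\setminus V(G_2)=\bigl(\bigcup_{i\in[a,b]}B_i\bigr)\setminus(B_a\cup B_b)$ is marked: if a marked vertex $m$ appears in some bag $B_i$ with $i\in[a,b]$, then the index $c\in C$ of the bag chosen for $m$ lies in the support $\{i:m\in B_i\}$, which is an interval; since $(a,b)$ contains no element of $C$ we have $c\in\{a,b\}$, $c<a$, or $c>b$, and in every case the interval from $c$ to $i$ contains $a$ or $b$, so $m\in B_a\cup B_b$ and hence $m\notin V(G_1)\setminus V(G_2)$.

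I do not anticipate a genuine obstacle: the lemma is essentially a pigeonhole over a path decomposition, and the single subtlety is arranging the window boundaries — choosing the blocks to be delimited by one bag per marked vertex — so that each marked vertex reaching into the window is pinned inside one of the window's two end bags; the degenerate cases $a=1$, $b=p$, $k=0$, and $p\le1$ are all immediate. As a byproduct this approach in fact yields the slightly stronger bound $|G_1|\ge|G|/(k+1)$.
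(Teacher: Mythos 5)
Your proof is correct and takes essentially the same approach as the paper's: fix a path decomposition, choose one bag per marked vertex, pigeonhole over the resulting windows delimited by these bags (together with the first and last bags), and carve $G_1$ out of the largest window, with $V(G_1)\cap V(G_2) = B_a\cup B_b$. The only difference is that the paper bounds the size of the pairwise-disjoint interiors $Y_{a,b}$, whereas you bound $|G_1|$ directly by covering $V(G)$ with overlapping windows, which incidentally yields the marginally stronger bound $|G_1|\geq |G|/(k+1)$.
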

\begin{proof}
Consider a path decomposition  $B_1, \dots, B_{p}$ of $G$ with width at most $t$. 
If $k=0$, then  $G_{1} := G$ and $G_{2}:= G[B_{1} \cup B_{p}]$ defines the desired 
 {\tsep{t}} $(G_1, G_2)$ of $G$. Now assume $k \geq 1$. 

For each marked vertex $v$, choose an arbitrary bag that contains $v$ and mark it. 
Let $m_1, \dots, m_{\l}$ denote the indices of the marked bags. Observe that $1 \leq \l \leq k$
(note that $\l < k$ if some bag is marked more than once). Also let  $m_0 := 1$ and $m_{\l +1} := p$. 

For $1 \leq i < j \leq p$, let $Y_{i,j}:= (B_{i+1} \cup \cdots \cup B_{j-1}) - (B_i \cup B_j)$. 
Observe that $Y_{i,j}$ is disjoint from $Y_{i', j'}$ when $j \leq i'$  
(this is a consequence of the axioms of path decompositions). 
In particular, the $\l+1$ sets $Y_{1, m_1}, Y_{m_1, m_2}, \dots, Y_{m_{\l -1}, m_{\l}}, Y_{m_{\l}, p}$ 
are pairwise disjoint. 
Hence, among them there is a set $Y_{a,b}$ satisfying  
$$
|Y_{a,b}| \geq \frac{ \sum_{j=0}^{\l} |Y_{m_j, m_{j+1}}|}{\l +1} 
\geq \frac{|G| - \sum_{j=1}^{\l} |B_{m_{j}}|}{\l +1} 
\geq \frac{|G| - \l(t+1)}{\l +1}
\geq \frac{|G| - k(t+1)}{k +1}.   
$$
On the other hand, 
none of $Y_{1, m_1}, Y_{m_1, m_2}, \dots, Y_{m_{\l -1}, m_{\l}}, Y_{m_{\l}, p}$ contains a marked vertex
(as follows again from the axioms of path decompositions).  
Thus  $G_1 := G[Y_{a,b} \cup B_a \cup B_b]$ and $G_2 := G[V(G) -  Y_{a,b}]$ define
a separation $(G_1, G_2)$ of $G$ with $V(G_1) \cap V(G_2) = B_a \cup B_b$ such that $|G_1| \geq \frac{|G| - k(t+1)}{k +1}$
and $V(G_1) - V(G_2)$ ($= Y_{a,b}$) has no marked vertex.   
Since $B_a, B_{a+1}, \dots, B_b$ is a path decomposition of $G_1$ with width at most $t$, we 
deduce that $(G_1, G_2)$ is the desired {\tsep{t}} of $G$.
\end{proof}

\begin{lemma}
\label{lem:refinement}
Let $(G_1, G_2)$ be a {\tsep{t}} of a graph $G$
with $k \geq 0$ marked vertices, and
let $\l$ be an integer satisfying $1 \leq \l \leq \lceil |G_{1}|/(k+1) \rceil$. 
Then there exists a {\tsep{t}} $(G'_1, G'_2)$ of $G$  with
$G'_1 \subseteq G_{1}$, $G_2 \subseteq G'_{2}$,  and $|G'_{1}| = \l$,  
such that no vertex in $V(G'_1) - V(G'_2)$ is marked. 
\end{lemma}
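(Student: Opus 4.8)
The plan is to prove Lemma~\ref{lem:refinement} by starting from the given {\tsep{t}} $(G_1,G_2)$ and repeatedly ``shrinking'' $G_1$ from one end of its path decomposition until exactly $\l$ vertices remain, while carefully never exposing a marked vertex. First I would fix a path decomposition $B_1,\dots,B_p$ of $G_1$ of width at most $t$ with $V(G_1)\cap V(G_2) = B_1\cup B_p$. The intuition is that sliding the ``window'' of used bags inward past a single vertex $v$ removes $v$ from $G_1$ and adds it to the interface; so I want to do this one vertex at a time and stop before a marked vertex would have to be absorbed into $B_1$ or $B_p$.

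\textbf{Key steps.} (i) Reduce to a unit-change decomposition: by inserting intermediate bags we may assume $|B_i\bigtriangleup B_{i+1}|\le 1$ for all $i$, so removing a bag from either end changes $|V(G_1)|$ by at most one. (ii) Identify a sub-interval $[a,b]\subseteq[1,p]$ with the property that $B_a\cup\dots\cup B_b$ is a path decomposition of an induced subgraph $G_1' := G[Y_{a,b}\cup B_a\cup B_b]$ that still forms a {\tsep{t}} $(G_1',G_2')$ with $G_2':=G[V(G)-Y_{a,b}]\supseteq G_2$, and with $V(G_1')-V(G_2') = Y_{a,b}$ marked-free. (iii) Choose $[a,b]$ as small as possible subject to $|G_1'|\ge\l$; then argue $|G_1'|=\l$ exactly. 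Here the point is that shrinking either end by one more bag decreases $|G_1'|$ by at most one (by step (i)), so we cannot ``overshoot'' $\l$ — unless the shrink is blocked by a marked vertex. (iv) Handle the marked-vertex obstruction: a marked vertex only lies in $Y_{a,b}$ (never in the interface, by the original hypothesis that $V(G_1)-V(G_2)$ contains the marked vertices we care about, combined with the path-decomposition axioms as in Lemma~\ref{lem:marked}); the condition $\l\le\lceil|G_1|/(k+1)\rceil$ is exactly what guarantees one of the $k{+}1$ marked-free ``gaps'' $Y_{m_j,m_{j+1}}$ from Lemma~\ref{lem:marked}'s proof is large enough to contain $\l$ consecutive non-interface vertices, so there is room to land on $\l$ without being forced across a mark.

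\textbf{Main obstacle.} The delicate part is step (iii)–(iv): making precise the claim that we can hit the target $\l$ \emph{exactly} rather than merely $\ge\l$ or $\le\l$. A clean way is to work inside a single marked-free block $Y_{m_j,m_{j+1}}$ (as in the proof of Lemma~\ref{lem:marked}): pick the index $j$ with $|Y_{m_j,m_{j+1}}|$ maximal, so $|Y_{m_j,m_{j+1}}|\ge\lceil|G_1|/(k+1)\rceil\ge\l$ as noted above; then within the path decomposition $B_{m_j},\dots,B_{m_{j+1}}$ of that block, slide an inner window $[a,b]$ with $m_j\le a\le b\le m_{j+1}$ so that $|Y_{a,b}\cup B_a\cup B_b|$ increases one unit at a time from its minimum (when $[a,b]$ is a single bag, giving at most $t+1$ vertices — if this already exceeds $\l$ a separate but easy sub-argument restricting to one bag and splitting it suffices, though since $\l\ge1$ and a single bag can be trimmed we can also just take $[a,b]$ with $Y_{a,b}=\emptyset$ as a base and grow) up to the full block of size $\ge\l$; by discreteness it passes through the value exactly $\l$, and throughout $Y_{a,b}$ stays within the marked-free block so no interior vertex is marked. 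One should double-check the corner case $\l$ smaller than what a single bag forces, which is why allowing $G_1'\subseteq G_2'$ (equivalently $Y_{a,b}=\emptyset$) in the definition of separation is used; with that, the base of the sliding window genuinely has as few as a handful of vertices and the intermediate-value argument goes through.
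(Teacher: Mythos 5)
Your plan is essentially the same as the paper's: reduce to a unit-change path decomposition of $G_1$, mark the interface bags $B_1,B_p$ plus one bag per marked vertex, apply pigeonhole to find a marked-free block, and then slide a window within that block to hit $|G'_1|=\l$ exactly by an intermediate-value/discreteness argument, handling the small-$\l$ regime separately. One bookkeeping slip deserves flagging: the pigeonhole bound $\ge\lceil |G_1|/(k+1)\rceil$ does \emph{not} hold for the interior gaps $Y_{m_j,m_{j+1}}$ as defined in Lemma~\ref{lem:marked} (those are pairwise disjoint and miss the vertices of the $q$ boundary bags, giving only $\ge (|G_1|-q(t+1))/(q-1)$). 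To get the clean bound you must apply pigeonhole to the \emph{full} bag-unions $B_{m_j}\cup\cdots\cup B_{m_{j+1}}$, which together cover $V(G_1)$ with $q-1\le k+1$ blocks; the paper deliberately redefines $Y_{a,b}:=B_a\cup\cdots\cup B_b$ inside this proof for exactly that reason, and your own size count $|Y_{a,b}\cup B_a\cup B_b|$ a line later already shows you need the total, not the interior. With that fix the sliding window starts at $|B_{m_j}|\le t+1$ and grows by one each step to the block total $\ge\l$, so the intermediate-value step works cleanly whenever $\l>t+1$; for $\l\le t+1$ you don't need any windowing at all — just take an arbitrary $\l$-subset $Y\subseteq V(G_1)$, set $G'_1:=G[Y]$, $G'_2:=G$, and note $V(G'_1)-V(G'_2)=\varnothing$, which is the paper's Case~1 and simpler than the ``restrict to one bag and split it'' sub-argument you gesture at.
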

\begin{proof} 
{\bf Case~1: $\l \leq t+1$}. Let $Y$ be an arbitrary subset of $\l$ vertices of $G_{1}$,
let $G'_{1} := G[Y]$ and let $G'_{2}:=G$. Then $V(G'_1) - V(G'_2)$ is empty
and $G'_{1}$ has a trivial path decomposition of width at most $t$ 
consisting of a single bag $B_{1}:=Y$. Hence $(G'_1, G'_2)$ is a 
{\tsep{t}} of $G$ as desired. 

{\bf Case~2: $\l > t+1$}. 
Let $B_{1}, \dots, B_{p}$ be a path decomposition of $G_{1}$ of width at most $t$
such that $B_{1} \cup B_{p} = V(G_{1}) \cap V(G_{2})$. 
We may assume that $|B_{i} \bigtriangleup B_{i+1}| = 1$ for each $i \in [1, p-1]$,
by modifying the path decomposition if necessary 
(observe that this can be done without changing the first and last bags).  

Mark bags $B_{1}$, $B_{p}$, and for each marked vertex $v$ mark an arbitrarily chosen 
bag that contains $v$ (thus the same bag can be chosen several times).  
Let $q$ be the total number of marked bags, and let $m_{1}, \dots, m_{q}$ denote the
indices of these bags, in order. (Thus $m_{1}=1$ and $m_{q}=p$.)
Since $|B_{1}| \leq t +1$ and $\l > t+1$, it follows that $p \geq 2$, and hence $q \geq 2$.
Let $Y_{a,b} := B_{a} \cup B_{a+1} \cup \cdots \cup B_{b}$
for $1 \leq a < b \leq p$. 
Since $Y_{m_{1},m_{2}} \cup \cdots \cup Y_{m_{q-1},m_{q}} = V(G_{1})$ and $q-1 \leq k+1$, there exists 
$j \in [1, q-1]$ such that 
$$
|Y_{m_{j},m_{j+1}}| \geq \left\lceil \frac{|G_{1}|}{q-1} \right\rceil 
\geq \left\lceil \frac{|G_{1}|}{k+1} \right\rceil \geq \l.
$$

Now, since $|B_{i} \bigtriangleup B_{i+1}| = 1$ for each $i \in [1,  p-1]$
and since $\l > |B_{m_{j}}|$, there exists an index $b\in [m_{j} + 1, m_{j+1}]$ 
such that $|Y_{m_{j},b}| = \l$. 
It follows from the axioms of a path decomposition that 
no vertex in $Y_{m_{j},b} - (B_{m_{j}} \cup B_{b})$ is marked.
Therefore, setting $G'_{1} := G[Y_{m_{j},b}]$ and 
$G'_{2} := G - (Y_{m_{j},b} - (B_{m_{j}} \cup B_{b}))$, 
and using that $B_{m_{j}}, \dots, B_{b}$ is a path decomposition of $G'_{1}$ of width at most $t$,
we deduce that $(G'_1, G'_2)$ is a {\tsep{t}} of $G$ with the required properties. 
\end{proof}

With a slight abuse of notation, we simply write $(G, v)$ to denote the rooted graph $(G, R)$
where $R$ consists only of the vertex $v$. 
The {\DEF height} of a rooted tree $(T, v)$  is the maximum length of a path from $v$ to a leaf, where the length
of a path is the number of its edges.  (Thus the height of $(T, v)$ is zero if $|T|=1$.) 
For $u \in V(T)$, the notions of {\DEF parent}, {\DEF ancestors}, 
{\DEF children}, {\DEF descendants} of $u$ in $(T, v)$
are defined as expected. When the root $v$ is clear from the context, we simply denote by $T_u$ 
the subtree of $(T, v)$ induced by $u$ and all its descendants. 
For $h \geq 1$, the {\DEF complete binary tree of height $h$}, denoted $\mc B_h$, is the unique rooted
tree of height $h$ where the root has degree $2$, every other non-leaf vertex has degree $3$, and
every path from the root to a leaf has length exactly $h$.
It is known, and not difficult to prove, that the pathwidth of $\mc B_h$ equals $h$. 

\begin{lemma}
\label{lem:tree}
Let $(T, v)$ be a rooted tree of height $h$ and maximum degree $\Delta$. 
If $(T, v)$ does not contain $\mc B_{k+1}$ as a (rooted) minor, then $|T| \leq (h+1)^{k+1}(\Delta+1)^{k+1}$. 
\end{lemma}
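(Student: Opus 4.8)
The plan is to prove the contrapositive: if $|T|$ is too large, then $(T,v)$ contains $\mc B_{k+1}$ as a rooted minor. I would argue by induction on $k$. For the base case $k=0$, $\mc B_1$ is a path on three vertices rooted at one end (or, depending on the exact convention for $\mc B_1$, the star $K_{1,2}$ rooted at its center); in any case, one checks directly that if $(T,v)$ has no such rooted minor then $|T|$ is bounded by the claimed quantity $(h+1)(\Delta+1)$ — in fact $T$ must be extremely small. The inductive step is the substantive part.

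For the inductive step, suppose $(T,v)$ does not contain $\mc B_{k+1}$ as a rooted minor, and let $h$ be its height and $\Delta$ its maximum degree. The key notion is to call a vertex $u \in V(T)$ \emph{branching} if the subtree $(T_u, u)$ contains $\mc B_k$ as a rooted minor. First I would observe that, since $(T,v)$ has no $\mc B_{k+1}$, no vertex $u$ can have two distinct children $c_1, c_2$ both of which are ancestors-or-equal to branching vertices whose subtrees realize $\mc B_k$ disjointly — more precisely, the set of branching vertices, when one restricts to those that are ``minimal'' (no branching proper descendant), cannot have two such vertices in different subtrees below a common vertex, for otherwise one could combine two disjoint $\mc B_k$-models through $v$ to build a $\mc B_{k+1}$-model. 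Hence the minimal branching vertices all lie along a single root-to-leaf path of $T$; in particular there is at most one of them, call it $w$ (or there are none). Then for every vertex $u$ not lying on the path from $v$ to $w$, the subtree $(T_u,u)$ has no $\mc B_k$ rooted minor, so by the induction hypothesis $|T_u| \le (h'+1)^k(\Delta+1)^k \le (h+1)^k(\Delta+1)^k$, where $h'$ is the height of $T_u$.

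To finish, I would bound $|T|$ by a charging/counting argument along the root-to-$w$ path $P = v = p_0, p_1, \dots, p_\ell = w$, where $\ell \le h$. Every vertex of $T$ is either on $P$, or lies in a subtree $T_u$ hanging off $P$ at some $p_i$ (i.e. $u$ is a child of some $p_i$ not on $P$); there are at most $\ell+1 \le h+1$ choices of $p_i$, at most $\Delta$ children at each, and each such subtree has at most $(h+1)^k(\Delta+1)^k$ vertices. So $|T| \le (\ell+1) + (\ell+1)\Delta \cdot (h+1)^k(\Delta+1)^k \le (h+1)(\Delta+1)\cdot(h+1)^k(\Delta+1)^k = (h+1)^{k+1}(\Delta+1)^{k+1}$, as required. (One must also handle the case where there is no branching vertex at all, which only makes $T$ smaller, and double-check the slack in the arithmetic — but the bound is generous.)

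The main obstacle I anticipate is making precise and correct the claim that minimal branching vertices lie on a single path: one needs to verify that two \emph{disjoint} rooted $\mc B_k$-models living in subtrees $T_{c_1}$ and $T_{c_2}$ below a common vertex $u$ can genuinely be assembled, together with the path from $v$ down to $u$ and then to $c_1$ and $c_2$, into a rooted $\mc B_{k+1}$-model of $(T,v)$ — this uses that the root of $\mc B_{k+1}$ has exactly two children, each of which roots a copy of $\mc B_k$, and that the branch set of the root can be taken to be the tree-path from $v$ to $u$ while the two subtrees contribute the two $\mc B_k$'s. Getting the rooted-minor bookkeeping exactly right (which vertex plays the role of the root, connectivity of branch sets) is where care is needed; the rest is routine counting.
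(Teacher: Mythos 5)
Your proposal is correct and follows essentially the same approach as the paper's proof: both proceed by induction on $k$, locate the unique deepest vertex whose subtree realizes a rooted $\mc B_k$ (your "minimal branching vertex" $w$ is the paper's vertex $u$ at maximum distance from the root), show via the same combining argument that subtrees hanging off the $v$--$w$ path and below $w$'s children contain no rooted $\mc B_k$, and then bound $|T|$ by counting the path vertices plus at most $\Delta(h+1)$ such subtrees each of size at most $(h+1)^k(\Delta+1)^k$. The bookkeeping differs slightly (the paper separates the children of $u$ from the other off-path subtrees), but the decomposition, the key combining lemma, and the arithmetic are the same.
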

\begin{proof}
The proof is by induction on $k$.
If $k=0$, then the tree $T$ is a path, and thus satisfies $|T| = h +1 \leq (h+1) (\Delta + 1)$. 
For the inductive step, assume $k \geq 1$. 
We may assume that  $\mc B_{k}$ is a minor of $(T, v)$, because otherwise we are done by induction. 
Let $u \in V(T)$ be a vertex such that $(T_u, u)$ contains $\mc B_{k}$ as a minor and $u$ is at maximum distance from $v$ in $T$.
Let $q$ denote that distance. Let $u_1, \dots, u_{\l}$ denote the children of $u$. Observe that $\l \geq 2$ since 
$(T_u, u)$ has a $\mc B_{k}$-minor, $k \geq 1$, and $q$ is maximum. Let $v_1, \dots, v_{q+1}$ be the vertices on the path from $v$ to $u$
in $T$, in order. Thus $v_1=v$ and $v_{q+1}=u$. 
For each $i \in [1, q]$, let $w_{i,1}, \dots, w_{i,a_i}$ denote the children of $v_i$ in $T$ that are distinct from $v_{i+1}$
(note that possibly $a_i = 0$). 

We claim that, for each $i \in [1,q]$ and $j \in [1, a_i]$, the rooted tree $(T_{w_{i,j}}, w_{i,j})$ has no $\mc B_k$ minor.
Indeed, otherwise a model of $\mc B_k$ in $(T_{w_{i,j}}, w_{i,j})$ could be combined with a model of
$\mc B_k$ in $(T_{v_{i+1}}, v_{i+1})$ (which exists, by the definition of $u$) and the $v_1$--$v_i$ path in $T$ to give a 
model of $\mc B_{k+1}$ in $(T, v)$, a contradiction. By induction, we thus have $|T_{w_{i,j}}| \leq h^{k}(\Delta+1)^{k}$, since 
the height of $(T_{w_{i,j}}, w_{i,j})$ is at most $h-1$. 

By the definition of $u$, the rooted tree $(T_{u_i}, u_i)$ has no $\mc B_k$ minor either for each $i \in [1,  \l]$. (Recall that $u_1, \dots, u_{\l}$ are the children of $u$.)
Hence
$|T_{u_i}| \leq h^{k}(\Delta+1)^{k}$ by induction. It follows
\begin{align*}
|T| &= q + 1 + \sum_{i=1}^q \sum_{j=1}^{a_i}  |T_{w_{i,j}}| + \sum_{i=1}^{\l}|T_{u_i}| \\
&\leq h +1 + \left[
\sum_{i=1}^q \sum_{j=1}^{a_i} h^{k}(\Delta+1)^{k}\right] + \l h^{k}(\Delta+1)^{k} \\
&\leq h^{k}(\Delta+1)^{k} + h (\Delta + 1) h^{k}(\Delta+1)^{k} + \Delta h^{k}(\Delta+1)^{k} \\
&= h^{k+1}(\Delta+1)^{k+1} +h^{k}(\Delta+1)^{k+1} \\
&\leq (h+1)^{k+1}(\Delta+1)^{k+1},
\end{align*}
as desired. (In the second inequality, we used that $h +1 \leq h^{k}(\Delta+1)^{k}$, which follows from the fact
that $h,k \geq 1$ and $\Delta \geq 2$.)
\end{proof}

\begin{lemma}
\label{lem:pathwidth_rooted_minors}
If $(T,v)$ is a rooted tree on $t$ vertices and
$(G,w)$ is a connected rooted graph with pathwidth at least $2t-2$,
then $(G,w)$ contains $(T, v)$ as a minor.
\end{lemma}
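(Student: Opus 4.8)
The plan is to reduce the rooted statement to the unrooted one (Theorem~\ref{thm:quickly_forest}) by exploiting the freedom in where the single root $w$ sits inside a high-pathwidth graph. First I would recall that a rooted tree $(T,v)$ on $t$ vertices is a minor of $(G,w)$ precisely when $G$ contains a $T$-model in which the branch set corresponding to $v$ contains $w$. So the goal is: given that $G$ is connected with pathwidth at least $2t-2$, find a $T$-model whose ``$v$-branch-set'' can be taken to contain the prescribed vertex $w$.

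The key idea is to split a path decomposition of $G$ so that one side still has large pathwidth and $w$ lies on the interface. Concretely, take a path decomposition $B_1,\dots,B_p$ of $G$ of minimum width (so width $\geq 2t-2$). Let $i$ be an index with $w \in B_i$. Consider the two ``halves'' $G[B_1\cup\cdots\cup B_i]$ and $G[B_i\cup\cdots\cup B_p]$; since pathwidth is subadditive in this kind of splitting, at least one of them — say the left half $G'$ — has pathwidth at least $t-1$. (If a cleaner bound is needed, note that deleting $B_i$ from $G$ separates the two halves and each remaining piece has pathwidth at most the original width; combined with $|B_i|\le 2t-1$ this forces one side to retain pathwidth $\ge t-1$, which is exactly where the factor $2$ in $2t-2$ is used.) Now $G'$ has pathwidth at least $t-1$, hence by Theorem~\ref{thm:quickly_forest} it contains every forest on $t$ vertices as a minor — in particular $T$. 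Fix a $T$-model $M=\{S_u : u\in V(T)\}$ in $G'$.

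The remaining work is to promote this unrooted $T$-model in $G'$ to a rooted $(T,v)$-model in $(G,w)$, i.e.\ to route $w$ into the branch set of $v$. Since $G$ is connected, there is a path $Q$ in $G$ from $w$ to $V(M)$; let $z$ be the first vertex of $Q$ lying in some branch set $S_{u_0}$. Then I would take a path $P$ in the tree $T$ from $u_0$ to $v$, and ``re-root'' the model along $P$: absorb the $w$–$z$ portion of $Q$ together with the branch sets $S_{u}$ for $u$ on $P$ into a single new branch set assigned to $v$, then redistribute — each $u$ on $P$ inherits the branch set of its neighbour on $P$ that is further from $v$, so that adjacencies along $P$ are preserved, and the branch sets hanging off $P$ keep their adjacency to the appropriate vertex. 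This is a standard re-rooting manipulation; one checks connectivity of each new branch set and that every edge of $T$ is still realized, and that $w$ now lies in the $v$-branch-set. Hence $(T,v)$ is a rooted minor of $(G,w)$.

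The main obstacle is the bookkeeping in this last re-rooting step: one must verify carefully that after shifting branch-set assignments along the tree-path $P$ and merging in the connecting path $Q$, (i)~every branch set remains nonempty and connected, (ii)~no two branch sets overlap, and (iii)~for each edge $xy$ of $T$ some $G$-edge still joins the corresponding branch sets. Points (i) and (iii) are where care is needed, since a branch set $S_u$ of a vertex $u$ on $P$ that had children off the path must retain edges to those children after being reassigned. The factor $2$ in the hypothesis ``pathwidth at least $2t-2$'' is precisely what buys room to discard the interface bag $B_i$ (of size up to $2t-1$) and still leave a side of pathwidth $\ge t-1$; I expect verifying that this split genuinely yields pathwidth $\ge t-1$ on one side to be the one genuinely quantitative point, everything else being combinatorial routing.
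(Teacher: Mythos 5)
Your approach — split a path decomposition at a bag containing $w$, find a $T$-model in one half, then re-root that model so that $w$ lands in the $v$-branch-set — differs from the paper's and has a genuine gap at exactly the point you flag as ``where care is needed.'' The re-rooting step does not work in general. Suppose the model's branch set reached from $w$ is $S_{u_0}$, and $P = u_0 u_1 \cdots u_k = v$ is the $T$-path. Any scheme that shifts branch sets along $P$ (so that $v$ ends up owning a set containing $w$) changes the branch set of some interior vertex $u_i$ of $P$; if $u_i$ has a neighbour $c$ of $T$ \emph{off} $P$, the new branch set assigned to $u_i$ need not have any $G$-edge to $S_c$, and the edge $u_i c$ of $T$ is no longer realized. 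A concrete failure: $T$ is the path $v = v_1 v_2 v_3$ with three extra leaves $a,b,c$ attached to $v_3$, and $w$ reaches only $S_a$; after shifting along $a v_3 v_2 v_1$, the vertex $v_3$ inherits $S_{v_2}$, which is adjacent to $S_{v_1}$ and $S_{v_3}$ but not to $S_b$ or $S_c$. Nothing in the proposal repairs this, and it cannot be repaired by local relabelling alone. (Your splitting step also needs work: from ``both halves have pathwidth $\le t-2$'' one only gets $\mathrm{pw}(G)\le (t-2)+|B_i|$, and since $|B_i|$ can be as large as $\mathrm{pw}(G)+1$ this does not contradict $\mathrm{pw}(G)\ge 2t-2$; but this is a secondary issue.)

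The paper sidesteps re-rooting entirely, and this is where the factor $2$ is actually used. Let $T^*$ be the tree obtained by gluing two disjoint copies $T,T'$ of the tree at their roots $v=v'$; then $|T^*| = 2t-1$, so by Theorem~\ref{thm:quickly_forest} the hypothesis $\mathrm{pw}(G)\ge 2t-2$ yields a $T^*$-model in $G$. Extend it (using connectivity of $G$) so every vertex of $G$ lies in some branch set; then $w$ lies in a branch set belonging, say, to the $T$-side. One can walk from $w$ to $B_v$ entirely inside $T$-side branch sets (following the path in $T$ from the vertex owning $w$ to $v$), avoiding all $T'$-side branch sets except $B_v$. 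Enlarging $B_v$ by that walk and keeping the $T'$-side branch sets yields a rooted $(T,v)$-model with $w$ in the root's branch set. No branch set is ever reassigned to a different tree vertex, so the adjacency issue never arises. If you want to salvage your approach, the fix is essentially to import this idea: instead of trying to re-root a model of $T$ itself, find a model of a larger tree in which \emph{every} vertex of $T$ has a ``spare'' copy of its subtree, so that whichever branch set $w$ reaches, there is a disjoint copy of $T$ to fall back on.
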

\begin{proof}
Let $(T',v')$ be a copy of $(T,v)$. 
Let $T^*$ be the (unrooted) tree obtained from the disjoint union of $T$ and $T'$
by identifying $v$ and $v'$.
Thus $T^*$ has $2t-1$ vertices.  Since $G$ has pathwidth at least $2t-2$, it 
contains a $T^*$-minor (by Theorem~\ref{thm:quickly_forest}), 
and hence has a model of $T^*$. Since $G$ is connected, we may
assume that every vertex of $G$ is in a branch set of this model. 
Let $B_v$ denote the branch set of vertex $v=v'$. 
Exchanging $T$ and $T'$ if necessary, we may assume that the root 
$w$ of $(G, w)$ is in a branch set corresponding to a vertex from $T$. 
Thus we can find a path $P$ in $G$  with one endpoint being $w$ and the other endpoint in $B_v$ such that 
$P$ avoids all branch sets of vertices in $V(T') - \{v'\}$. 
Now, replacing the branch set $B_{v}$ by $B_{v}  \cup V(P)$ and taking all branch sets
corresponding to vertices of $T'$ distinct from $v'$, we obtain a model of $(T', v')$ in $(G, w)$. 
Therefore, $(G,w)$ contains $(T,v)$ as a minor. 
\end{proof}

The following lemma is well known; a proof is included for completeness
(see Corollary 3.1 in~\cite{EST94} for a similar result).  

\begin{lemma}
\label{lem:pw_increase}
Let $G_1, G_2, G_3$ be three connected graphs, each with pathwidth at least $k$,
and let $v_i$ be an arbitrary vertex of $G_i$, for $i=1,2,3$. 
Let $G$ be the graph obtained from the disjoint union of $G_1, G_2, G_3$ by adding a new vertex $v$ adjacent to $v_1, v_2, v_3$. Then $G$ has pathwidth at least $k+1$. 
\end{lemma}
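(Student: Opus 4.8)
The plan is to argue by contradiction: suppose $G$ has a path decomposition $\mathcal{B} = (B_1,\dots,B_p)$ of width at most $k$, and derive that one of $G_1,G_2,G_3$ has pathwidth at most $k-1$, contradicting the hypothesis. Since $v$ has neighbours $v_1,v_2,v_3$, each edge $vv_i$ forces a bag $B_{j_i}$ containing both $v$ and $v_i$; moreover the bags containing $v$ form a contiguous subinterval $[a,b]$ of $[1,p]$, and each $v_i$ lies in some bag inside $[a,b]$. The first step is to use this to \emph{localize} each $G_i$: because $G_i$ is connected and disjoint from the other two $G_j$'s and from $v$, the bags meeting $V(G_i)$ form a contiguous interval $I_i$, and $I_i$ must contain the point $j_i \in [a,b]$.

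The key observation is that the three intervals $I_1,I_2,I_3$ all meet the common interval $[a,b]$, so by a pigeonhole / Helly-type argument on the line, at least one of them — say $I_1$ — cannot "escape" past $[a,b]$ on both sides while another escapes to the left and a third to the right; more precisely, among the three indices $j_1,j_2,j_3 \in [a,b]$, one of them, say $j_1$, is between the other two (or equal), so the interval $I_1$ is sandwiched and in particular $v \in B_j$ for every bag index $j$ at which we would need to "route past" $G_1$. The clean way to phrase it: pick the index $i$ (WLOG $i=1$) for which $j_i$ is a median of $\{j_1,j_2,j_3\}$. Then I claim the restriction of $\mathcal{B}$ to the interval $I_1$, with $v$ deleted from every bag, is a path decomposition of $G_1$ of width at most $k-1$. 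Indeed all vertices and edges of $G_1$ are covered since $I_1$ is the full support of $V(G_1)$; the subtree/interval property is inherited; and each bag $B_j$ with $j \in I_1$ contains $v$, so removing $v$ drops its size by one, giving width $\le k-1$.

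The point that needs care — and which I expect to be the \textbf{main obstacle} — is precisely the claim that $v$ lies in every bag $B_j$ with $j \in I_1$ when $j_1$ is the median. One shows this as follows: $I_1 = [c_1,d_1]$ where $c_1$ is the least and $d_1$ the greatest index of a bag meeting $V(G_1)$. If some $j \in [c_1,d_1]$ had $v \notin B_j$, then $j \notin [a,b]$, so either $j < a$ or $j > b$; say $j < a \le j_1$ (the case $j>b$ is symmetric). Then $c_1 \le j < a$. Now consider $G_2$ and $G_3$: their supports $I_2,I_3$ contain $j_2,j_3 \ge a > j \ge c_1$. But $G_1$ has a vertex in a bag with index $\le j < a$, and since $j_1$ is the median, WLOG $j_2 \le j_1 \le j_3$; then $I_2$ meets $[c_1, a)$ as well only if... — here one has to chase the interval combinatorics to locate a bag simultaneously forced to contain vertices of two distinct $G_i$'s, contradicting their disjointness, OR forced to contain $v$ and reach a contradiction with $j \notin [a,b]$. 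The honest statement of the pigeonhole step is: since $I_1, I_2, I_3$ are three intervals each containing a point of the interval $[a,b]=\{j: v\in B_j\}$, and since $I_1\cap I_2$, $I_1\cap I_3$, $I_2\cap I_3$ are each empty outside their respective single shared bags only trivially, at least one $I_i$ is entirely contained in $[a,b]$ — this is because three pairwise "almost-disjoint" intervals on a line that all stab a common interval cannot all extend beyond it on conflicting sides without two of them overlapping. Once $I_i \subseteq [a,b]$ is established for some $i$, deleting $v$ from those bags finishes the proof. I would carry out the interval bookkeeping carefully in the final write-up; the conceptual content is entirely the "median / three stabbing intervals" lemma on the line.
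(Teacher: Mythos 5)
There is a genuine gap, and it sits exactly where you flagged the ``main obstacle.'' The claim that, for the interval $I_i$ whose stabbing index $j_i$ is the median, every bag with index in $I_i$ contains $v$ (equivalently $I_i \subseteq [a,b]$) does not follow from the hypotheses and is in general false. Your heuristic justification --- that three intervals all stabbing $[a,b]$ and being ``pairwise almost-disjoint'' cannot all escape $[a,b]$ --- quietly assumes the $I_i$ are essentially pairwise disjoint, but they need not be: the sets $V(G_1),V(G_2),V(G_3)$ are pairwise disjoint, yet a single bag $B_j$ may perfectly well contain vertices from several of them simultaneously, so the support intervals can overlap each other and overlap the complement of $[a,b]$. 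For the same reason the proposed fallback --- ``locate a bag forced to contain vertices of two distinct $G_i$'s, contradicting their disjointness'' --- is not a contradiction at all.

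The paper's proof sidesteps this by proving the right, weaker containment. It reindexes so that $\ell_1\leq\ell_2$ and $r_2\leq r_3$ (where $\ell_i,r_i$ are the endpoints of $I_i$; such a reindexing always exists), and then observes that $I_2\subseteq I_1\cup I_3\cup[\ell_v,r_v]$. The point is that you do not need $v$ in every bag over $I_2$; you only need that every such bag contains \emph{some} vertex outside $V(G_2)$ --- a vertex of $G_1$, of $G_3$, or $v$. Restricting the decomposition to $V(G_2)$, i.e.\ replacing each $B_j$ by $B_j\cap V(G_2)$, then yields a path decomposition of $G_2$ of width at most $k-1$, the desired contradiction. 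To repair your argument, replace the median-of-$j_i$ selection by this endpoint-sandwich selection of $I_2$, and delete all of $V(G)\setminus V(G_2)$ from each bag rather than deleting only $v$.
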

\begin{proof}
Arguing by contradiction, assume $G$ has a path decomposition $B_1, \dots, B_p$ of width at most $k$. 
For every $w \in V(G)$, let $[\l_w, r_w]$ denote the interval of indices $j$ such that $w \in B_j$. 
For each $i \in [1,3]$, let $I_i := \cup \{ [\l_w, r_w] : w \in V(G_i)\}$. Since $G_i$ is connected, $I_i$ is again an interval;
we denote by $\l_i$ and $r_i$ its left and right endpoints, respectively. 
Reindexing $G_1, G_2, G_3$ if necessary,  
we may assume  that $\l_1\leq \l_2$ and $r_2 \leq r_3$. 
Since $vv_i \in E(G)$, it follows that $I_i \cap [\l_v, r_v] \neq \varnothing$ for each $i \in [1,3]$. 
This implies that $I_1 \cup I_3 \cup [\l_v, r_v]$ is again an interval, and this interval contains $I_2$. 
Hence, every bag $B_j$ ($j \in [1, p]$) that includes a vertex of $V(G_2)$ contains also at least
one vertex from $V(G) - V(G_2)$. Therefore, 
$B_1 \cap V(G_2), \dots, B_p\cap V(G_2)$ is a path decomposition of $G_2$ of width at most $k-1$, a contradiction. 
\end{proof}

The next lemma is our main tool. Informally, it shows that 
every connected graph $G$ with pathwidth at least $t$ and no 
{\tsep{(t-1)}} $(G_1, G_2)$ of $G$ with $|G_1|$ ``big'' has a connected subgraph $G'$ 
of constant size with pathwidth at least $t$. It turns out that a slightly stronger statement 
is easier to prove by induction, namely that for every vertex $w$ of $G$ one can find 
such a subgraph $G'$ containing it.   
We note that the vertex $w$ will be an arbitrary vertex when this lemma is used 
in the proof of Theorem~\ref{thm:main}. 

\begin{lemma}
\label{lem:main_pw}
There exists a computable function $f:\N \times \N \to \N $ such that for
every $t, r\in \N$, 
every connected graph $G$ of pathwidth at least $t$, and every vertex $w\in V(G)$, 
at least one of the following holds:
\begin{enumerate}[(a)]
\item there exists a connected subgraph $G'$ of $G$ 
with $w\in V(G')$ such that $G'$ has pathwidth at least $t$ and $|G'| \leq f(t, r)$, 
\item there exists a {\tsep{(t-1)}} $(G_1, G_2)$ of $G$ with $|G_1| \geq r$. 
\end{enumerate}
\end{lemma}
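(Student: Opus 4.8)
The plan is to induct on $|G|$. Fix $t$ and $r$, and suppose neither (a) nor (b) holds for $(G,w)$. We want a contradiction, from which $f(t,r)$ will be extracted by bookkeeping through the induction. Since $G$ has pathwidth at least $t$, Theorem~\ref{thm:quickly_forest} will be used to locate structure; the subtlety, as the discussion before the lemma emphasises, is that balls of large radius around $w$ may contain high-degree vertices, so we cannot simply take a small ball. Instead I would look at a BFS tree $(T,w)$ of $G$ rooted at $w$ and analyse it with Lemma~\ref{lem:tree}: either $(T,w)$ contains a complete binary tree $\mathcal{B}_{t}$ as a rooted minor --- in which case a bounded-size piece of $G$ already has pathwidth at least $t$ (since $\mathrm{pw}(\mathcal{B}_t)=t$), giving a bounded witness for (a), a contradiction --- or, by Lemma~\ref{lem:tree}, the tree $(T,w)$ restricted to bounded height has bounded size \emph{unless} some vertex has unbounded degree. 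Thus the only obstruction to (a) via a small ball is a vertex of large degree encountered within bounded distance of $w$.

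\textbf{Handling high-degree vertices.} This is the heart of the argument and the main obstacle. Suppose within bounded distance of $w$ there is a vertex $x$ of large degree. Many of the subtrees hanging off $x$ in the BFS tree must themselves be ``tall'' or ``complicated'' --- more precisely, each such subtree $T_y$ is a connected subgraph of $G$, and I would like to say that it either has pathwidth at least $t-1$ or is small and hence (being minor-minimal-ish) avoidable. If three of the subtrees hanging off a common vertex each have pathwidth at least $t-1$, then Lemma~\ref{lem:pw_increase} immediately produces pathwidth at least $t$ in a bounded-size subgraph (a star of three such subtrees plus the centre), contradicting the failure of (a). So we may assume at most two children-subtrees of any vertex have pathwidth $\geq t-1$; combined with the complete-binary-tree dichotomy from Lemma~\ref{lem:tree}, the ``spine'' of high-pathwidth subtrees through $G$ is essentially a path, and every branch off that spine is a subgraph of bounded pathwidth. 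Running BFS/induction along this spine, applying Lemma~\ref{lem:main_pw} recursively to the bounded-pathwidth pieces (with a smaller value of $t$), and splicing the resulting small path-like decompositions together with Lemma~\ref{lem:marked} and Lemma~\ref{lem:refinement}, should produce a $\tsep{(t-1)}$ $(G_1,G_2)$ of $G$: the pieces off the spine, being of pathwidth $\le t-2$ and each of bounded size by the inductive hypothesis applied at parameter $t-1$, can be assembled into one side $G_1$ of a path-decomposition-style separation of width $t-1$. If $|G_1|\geq r$ we get outcome (b); if $|G_1|<r$ then the whole relevant region around $w$ has bounded size, and it must contain a subgraph of pathwidth $\geq t$ (else $G$ itself would, locally, have small pathwidth, contradicting $\mathrm{pw}(G)\geq t$), giving outcome (a). Either way we contradict the assumption, completing the induction.

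\textbf{Bookkeeping.} The function $f(t,r)$ is defined by the recursion implicit above: a bounded-size ball radius plus the Lemma~\ref{lem:tree} bound $(h+1)^{t}(\Delta+1)^{t}$ with $h,\Delta$ themselves bounded in terms of $f(t-1,\cdot)$ and $r$, all of which is computable. I would set up the induction so that the base case $t=1$ (or $t=0$) is trivial --- pathwidth at least $1$ means a single edge suffices for (a) --- and each inductive step only invokes $f$ at strictly smaller first argument, or at the same first argument but strictly smaller graph, so the recursion is well-founded. The delicate point requiring care is ensuring that when we patch together the small pieces hanging off the spine into the side $G_1$ of a $\tsep{(t-1)}$-separation, the two ``end bags'' $B_1$ and $B_p$ really do capture all of $V(G_1)\cap V(G_2)$; this is exactly what Lemma~\ref{lem:refinement} is designed to arrange, by trimming $G_1$ down while keeping its interior free of the marked (spine-attachment) vertices. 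Modulo that gluing argument, the proof is a fairly mechanical combination of the five preliminary lemmas.
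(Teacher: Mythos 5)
Your outline correctly identifies most of the right ingredients (BFS balls around $w$, Lemma~\ref{lem:tree} to bound the ball size, Lemma~\ref{lem:pw_increase} at a high-degree vertex, and Lemmas~\ref{lem:marked} and~\ref{lem:refinement} for producing separations), and your treatment of the low-degree case is essentially the paper's Case~1. But the handling of the high-degree vertex has two genuine gaps that, as stated, break the argument.

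First, you write that if three subtrees hanging off a common vertex each have pathwidth at least $t-1$, then Lemma~\ref{lem:pw_increase} ``immediately produces pathwidth at least $t$ in a bounded-size subgraph''. This is not so: those subtrees need not be small, so their star with the centre is a subgraph of pathwidth $\geq t$ but of \emph{unbounded} size, and outcome (a) does not follow. The missing step is to apply the lemma inductively at parameter $t-1$ \emph{to each such subtree} (which has pathwidth $\geq t-1$, so the hypothesis applies) to extract from it a small connected subgraph of pathwidth $\geq t-1$ containing the attachment neighbour of $x$; only then does Lemma~\ref{lem:pw_increase} produce a bounded witness for (a). Your proposal instead applies the inductive hypothesis at smaller $t$ ``to the bounded-pathwidth pieces'', but the lemma says nothing about graphs whose pathwidth is \emph{below} the parameter, so that application is vacuous. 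The cascading consequence --- that ``at most two children-subtrees have pathwidth $\geq t-1$'', and hence a ``spine'' exists --- is therefore not justified, and the paper in fact never needs or constructs such a spine: it works directly with the neighbours of one high-degree vertex $x$.

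Second, you do not address what happens when the inductive call at $t-1$ returns outcome (b) --- a pw-$(t-2)$-separation of the subtree rather than a small high-pathwidth subgraph. The paper spends two of its three auxiliary claims (its Claims~\ref{claim:exists1} and~\ref{claim:exists2}) precisely on this case: a pw-$(t-2)$-separation of a component $C$ of $G-X$, together with the bounded connected set $X$ containing $x$ and the boundary of the separation, must be reassembled into a pw-$(t-1)$-separation of $G$ of the required size, and this reassembly is where the carefully chosen constants $r_1,r_2,r_3$ and another application of Lemma~\ref{lem:marked} enter. Without this step the induction does not close. These two points are the heart of the lemma, and they are exactly where the paper's induction on $t$ (not on $|G|$) does the work; the additional induction on $|G|$ you propose is not needed and does not substitute for it.
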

\begin{proof}
The proof is by induction on $t$. For $t=0$, the vertex $w$ itself provides a connected subgraph $G'$ of $G$ of pathwidth $0$, 
and thus the claim holds with $f(0, r) :=1$. Now assume $t \geq 1$. 
If $r \leq 1$, letting $G_{1}$ be the graph induced by an arbitrary vertex of $G$ and
letting $G_{2}:=G$, we obtain a {\tsep{(t-1)}} $(G_1, G_2)$ of $G$
with $|G_1| \geq r$.  Thus we may also assume $r \geq 2$. 

Let
\begin{align*}
r_{1} &:= (r+t-1) + (r+t)2t + r \\
r_{2} &:= (r+t-1)\big(1 + f(t-1, r_{1})\big) + (r+t)2t + r \\
r_{3} &:= (r+t-1)\big(1 + f(t-1, r_{1}) + f(t-1, r_{2})\big) + (r+t)2t + r.
\end{align*}    
Let $\Delta, \eps$, and $d$ be constants defined as follows:
\begin{align*}
\Delta &:= (r+t)\left(\sum_{i=1}^{3}f(t-1, r_{i}) + 2t + 1 \right) + r - 1; \\
\eps &:= \frac{1}{2r + t}; \\ 
d &:= \max\left\{ 
\frac{2t\ln(\Delta+1)}{\ln(1 + \eps)},
\left(\frac{2t}{\ln(1 + \eps)}\right)^2
\right\}.
\end{align*}
We will prove the claim with  
$$
f(t,r) := \left\lceil \max\left\{ \Delta^{d+1}, \Delta + d + 1 \right\} \right\rceil. 
$$
(We remark that the ceiling is there only to ensure that $f(t,r)$ is an integer.) 
We may assume that $|G| > f(t, r)$, since otherwise we are done with $G'=G$. 

{\bf Case~1: Every vertex at distance at most $d$ from $w$ has degree at most $\Delta$.} 
For $i=0, 1, \dots, d$, let $H_i$ be the subgraph of $G$ induced by all vertices  
at distance at most $i$ from $w$, and let $J_i$ be an arbitrary breadth-first
search tree of $H_i$ from $w$. Since $|H_d| \leq \Delta^{d+1} \leq  f(t, r) < |G|$,  
we deduce that $V(H_i) - V(H_{i-1})$ is not empty for each $i \in [1, d]$.  
The graph $H_d$ is connected, includes the vertex $w$, and has at most $f(t, r)$ vertices. 
Thus we are done if the pathwidth of $H_{d}$ is at least $t$. 
So let us assume that $H_{d}$ has pathwidth at most $t-1$ (and thus, in particular, 
$J_{d}$ has pathwidth at most $t - 1$). 

The tree  $\mc B_{t}$, which has pathwidth $t$, cannot be a minor of $(J_d, w)$. 
Hence we have
\begin{equation}
\label{eq:Hd}
|H_d|=|J_d| \leq (d+1)^{t}(\Delta+1)^{t}
\end{equation}
by Lemma~\ref{lem:tree}. 

By the definition of $d$, 
$$
d \geq \frac{d}{2} + \frac{\sqrt d \ln(d+1)}{2} 
\geq \frac{t\ln(\Delta+1)}{\ln(1 + \eps)} + \frac{t\ln(d+1)}{\ln(1+\eps)}, 
$$
which implies $(1+\eps)^d \geq (d+1)^{t}(\Delta+1)^{t}$.
Thus, if $|H_i| > (1+\eps)|H_{i-1}|$ for each $i\in [1, d]$, then 
$$
|H_d| > (1+\eps)^d \geq (d+1)^{t}(\Delta+1)^{t}, 
$$
contradicting~\eqref{eq:Hd}. 
Hence there exists $j\in [1, d]$ such that $|H_j| \leq (1+\eps)|H_{j-1}|$. 
Mark all $k:= |H_j| - |H_{j-1}|$ vertices of $H_j$ that are in 
$V(H_j) - V(H_{j-1})$. We have $k \geq 1$, since $V(H_j) - V(H_{j-1})$ is not empty, and 
also $k \leq \eps |H_{j-1}| \leq \eps |H_{j}|$.  
Since the pathwidth of $H_{j}$ is at most that of $H_{d}$,  
by Lemma~\ref{lem:marked} there is a {\tsep{(t-1)}} $(L_1, L_2)$ of $H_j$  
such that no vertex in $V(L_1) - V(L_2)$ is marked and such that
$$
|L_1| \geq \frac{|H_j| - kt}{k+1} 
\geq \frac{|H_j| - \eps t|H_j|}{2k}
\geq \frac{1 - \eps t}{2\eps}
= r.
$$
Now, the fact that no vertex in $V(L_1) - V(L_2)$
is marked implies that $V(L_1) - V(L_2) \subseteq V(H_{j-1})$, and hence 
no vertex in $V(L_1) - V(L_2)$ is adjacent in $G$ to a vertex in $V(G) - V(L_1)$. 
Therefore, 
$G_1:=L_1$ and $G_2:= G - (V(L_1) - V(L_2))$ defines a {\tsep{(t-1)}} $(G_1, G_2)$ of $G$ 
with $|G_1| \geq r$. This concludes the proof of Case~1. 

{\bf Case~2: Some vertex at distance at most $d$ from $w$ has degree more than $\Delta$.} 
Let $x$ be such a vertex and let $x_1, \dots, x_p$ denote its neighbors, where $p \geq \Delta + 1$.  

First we prove a few easy claims. 

\begin{claim}
\label{claim:link_up}
If there exists a connected subgraph $H$ of $G$ of pathwidth at least $t$ 
such that $|H| \leq f(t, r) -d$  and  $x \in V(H)$, 
then the lemma holds.  
\end{claim}
\begin{proof}
Let $P$ be a shortest $w$--$x$ path in $G$ and let $G' := G[ V(H) \cup V(P) ]$. Then $G'$ is connected and has
pathwidth at least $t$; moreover, $w\in V(G')$ and $|G'| \leq |H| + d \leq f(t, r)$.
\end{proof}

\begin{claim}
\label{claim:exists1} 
If there exists $X\subseteq V(G)$ with $|X| \leq \frac{f(t, r) - d - r}{r+t} - 2t$ 
such that $G[X]$ is connected, 
$x\in X$, 
and
$H:=G - X$ has a {\tsep{(t-1)}} $(H_1, H_2)$ with 
$|H_1| \geq (r + t - 1) |X| + (r+t)2t + r$, 
then the lemma holds. 
\end{claim}
\begin{proof}
Using Lemma~\ref{lem:refinement} with $k=0$ and $\l= (r + t - 1) |X| + (r+t)2t + r$, 
we may assume that the {\tsep{(t-1)}} $(H_1, H_2)$  of $H$
has been chosen so that $|H_1| = (r + t - 1) |X| + (r+t)2t + r$.  

Let $J:= G[ V(H_1) \cup X ]$. If $J$ has pathwidth at least $t$,  then 
since $H_{1}$ has pathwidth at most $t-1$ there is a unique component $J'$ of $J$
of pathwidth at least $t$, namely the one containing $X$ (recall that $G[X]$ is connected).
Since $x\in V(J')$ 
and $|J'| \leq |J| = |H_1| + |X| = (r + t) |X| + (r+t)2t + r  \leq f(t, r) - d$,   
we are done by Claim~\ref{claim:link_up}.  
Thus we may assume that $J$ has pathwidth at most $t-1$. 

Let $Y:=X \cup (V(H_1) \cap V(H_2))$ and mark all vertices of $J$ that are in $Y$. 
Since $|J| = (r + t) |X| + (r+t)2t + r  \geq (r + t) |Y| + r$, 
by Lemma~\ref{lem:marked} there is a {\tsep{(t-1)}} $(J_1, J_2)$ of $J$ with $|J_1| \geq r$
such that $Y \subseteq V(J_2)$. 
Observe that no vertex in $V(J_1) - Y$ is adjacent in $G$ to a vertex in $V(G) - V(J_1)$. Hence, 
 $G_1:=J_1$ and $G_2:= G - (V(J_1) - Y)$ defines a {\tsep{(t-1)}} $(G_1, G_2)$ 
of $G$ with $|G_1| \geq r$. 
\end{proof}

\begin{claim}
\label{claim:exists2}
If there exists $X\subseteq V(G)$ with $|X| \leq 1 + \sum_{i=1}^{3}f(t-1, r_{i})$ 
such that $G[X]$ is connected, $x\in X$, 
and
every component of $G - X$ that includes some vertex in  $\{x_1, \dots, x_p\}$ 
has pathwidth at most $t-1$, then the lemma holds. 
\end{claim}
\begin{proof}  
Let $q$ be the number of neighbors of $x$ that are not in $X$; let us assume
without loss of generality that these neighbors are $x_{1}, \dots, x_{q}$. 
Let $H$ be the union of all components of $G-X$ that include at least one of these vertices.
Thus $H$ has pathwidth at most $t-1$ by the assumption of the claim. 
Since $|H| \geq q \geq p - |X| \geq \Delta + 1 - |X| \geq (r + t - 1) |X| + (r+t)2t + r$, 
by Lemma~\ref{lem:marked} there is a {\tsep{(t-1)}} $(H_1, H_2)$ of $H$ 
with $|H_1| \geq (r + t - 1) |X| + (r+t)2t + r$. The claim follows then from Claim~\ref{claim:exists1},
since  $|X| \leq \sum_{i=1}^{3}f(t-1, r_{i}) + 1 \leq \frac{f(t, r) - d - r}{r+t} - 2t$.  
\end{proof}

Let $X_{0} := \{x\}$. 
Apply the following argument first with $j=1$, then $j=2$, then $j=3$. 
Since $|X_{j-1}| \leq 1 + \sum_{i=1}^{j-1}f(t-1, r_{i})$, 
the graph $G[X_{j-1}]$ is connected, and $x \in X_{j-1}$,
by Claim~\ref{claim:exists2} we may assume that there is a component $C$ of $G - X_{j-1}$
with pathwidth at least $t$ that includes a neighbor $x_{n_{j}}$ of $x$ (otherwise we are done). 
By induction (on $t$), either $C$ has a connected subgraph $H_{j}$ of 
pathwidth at least $t-1$ with  
$|H_{j}| \leq f(t-1, r_{j})$ and $x_{n_{j}} \in V(H_{j})$,
or there is a {\tsep{(t-2)}} $(C_{1}, C_{2})$ of $C$ with $|C_{1}| \geq r_{j}$. 
In the second case, $(C_{1}, C'_{2})$ is a {\tsep{(t-2)}} of $G - X_{j-1}$, where
$C'_{2}$ is the union of $C_{2}$ with all components of $G - X_{j-1}$ distinct from $C$. 
Since  $|X_{j-1}| \leq 1 + \sum_{i=1}^{j-1}f(t-1, r_{i}) \leq \frac{f(t, r) - d - r}{r+t} - 2t$ and
by the definition of $r_{j}$ we have
$|C_{1}| \geq r_{j} \geq (r+t-1)|X_{j-1}| + (r+t)2t + r$,  
we are done by Claim~\ref{claim:exists1}.  
In the first case, let $X_{j} := X_{j-1} \cup V(H_{j})$.
Observe that $|X_{j}| \leq 1 + \sum_{i=1}^{j}f(t-1, r_{i})$ 
and that $G[X_{j}]$ is connected (because of the vertex $x$). 

Now that the sets $X_{0}, X_{1}, X_{2}, X_{3}$ are defined, let $P$ be a shortest path in $G$ from $w$ to $X_{3}$. 
Thus $|P| \leq d + 1$, since $x \in X_{3}$ and $x$ is at distance at most $d$ from $w$ in $G$.  
Since $H_{i}$ is connected and has pathwidth at least $t-1$ for each $i \in [1,3]$,
and since $H_{1}, H_{2}, H_{3}$ are pairwise vertex-disjoint, 
the graph $G[X_{3}]$ has pathwidth at least $t$ by Lemma~\ref{lem:pw_increase}. 
It follows that $G' := G[X_{3} \cup V(P)]$ is a connected subgraph of $G$ that includes $w$, 
with pathwidth at least $t$, and satisfying 
$|G'| \leq |X_{3}| + |P| - 1 \leq 1 + \sum_{i=1}^{3}f(t-1, r_{i}) + d \leq f(t,r)$. 
This concludes the proof. 
\end{proof}

Now we turn to the proof of Theorem~\ref{thm:main}. 

\begin{proof}[Proof of Theorem~\ref{thm:main}]
The proof is in two steps: First we prove the theorem in the special case where 
$\F$ consists of a single tree, which we then use to handle the general case.

First consider the case where $\F=\{T\}$, where $T$ is a tree on $t$ vertices. 
Let $r:= \max \{ b(i): 0 \leq i \leq 2t - 2\}$, where $b$ is the function
from Lemma~\ref{lem:reduction}. 
Let $G$ be an arbitrary graph. 
Let $c := \max\{1,2(t-1), r, f(t-1, r)\}$, 
where $f$ is the function from Lemma~\ref{lem:main_pw}.   
We will show that $\tau_{\F}(G) \leq c \cdot \nu_{\F}(G)$ 
by induction on $|G|$.
If $|G| = 1$ then the claim obviously holds since $c \geq 1$.
Now assume that $|G| > 1$. Since $T$ is connected, we have that 
$\nu_{\F}(G) = \nu_{\F}(G_{1}) + \cdots + \nu_{\F}(G_{\l})$ and 
$\tau_{\F}(G) = \tau_{\F}(G_{1}) + \cdots + \tau_{\F}(G_{\l})$, where
$G_{1}, \dots, G_{\l}$ are the components of $G$.  
We may thus assume that $G$ is connected, since otherwise
we are done by applying induction on its components.

If the pathwidth of $G$ is at most $t-2$, then 
we directly obtain that $\tau_{\F}(G) \leq 2(t-1) \cdot \nu_{\F}(G) \leq c \cdot \nu_{\F}(G)$ 
by Lemma~\ref{lem:bounded_pathwidth}. 
Hence assume that $G$ has pathwidth at least $t-1$. 

Apply Lemma~\ref{lem:main_pw} on $G$ (with $w$ an arbitrary vertex of $G$) 
to obtain one of the two possible outcomes. If the outcome is 
a {\tsep{(t-2)}} $(G_{1}, G_{2})$ of $G$ with $|G_{1}| \geq r$ then
by Lemma~\ref{lem:refinement} (with no marked vertices) we may assume that $|G_{1}|=r \leq c$. 
If $G_{1}$ contains $T$ as a minor then 
$\nu_{\F}(G - V(G_{1})) \leq \nu_{\F}(G) - 1$. By induction, there is an $\F$-transversal $Y$
of $G- V(G_{1})$ of size at most $c\cdot \nu_{\F}(G - V(G_{1})) \leq c \cdot \nu_{\F}(G) - c$, and hence 
$V(G_{1}) \cup Y$ is an $\F$-transversal of $G$ of size at most $c \cdot \nu_{\F}(G)$.
If $G_{1}$ has no $T$-minor, then  
since $q := |V(G_{1}) \cap V(G_{2})| \leq 2t - 2$ and $r \geq b(q)$, by Lemma~\ref{lem:reduction} 
there is a graph $G'$ with $\nu_{\F}(G') = \tau_{\F}(G)$, 
$\tau_{\F}(G') = \tau_{\F}(G)$, and $|G'| < |G|$. Since by induction 
$\tau_{\F}(G') \leq c \cdot \nu_{\F}(G')$, we are done in this case. 

Suppose now that the outcome of Lemma~\ref{lem:main_pw} is 
a subgraph $G'$ of $G$ of pathwidth at least $t-1$ with $|G'| \leq f(t-1, r) \leq c$. 
Then, since $G'$ contains $T$ as a minor (by Theorem~\ref{thm:quickly_forest}), 
$\nu_{\F}(G - V(G')) \leq \nu_{\F}(G) - 1$. By induction, there is an $\F$-transversal $Y$
of $G-V(G')$ of size at most $c\cdot \nu_{\F}(G - V(G')) \leq c \cdot \nu_{\F}(G) - c$, and hence 
$V(G') \cup Y$ is an $\F$-transversal of $G$ of size at most $c \cdot \nu_{\F}(G)$.
This concludes the proof of the case where $\F$ consists of a single tree. 

Now let $\F$ be an arbitrary finite set of graphs containing a forest $F$. 
Let $q:=|\F|$ and let $r$ denote the maximum number of components of a graph in $\F$. 
Let $t:=|F|$. Let $T$ be an arbitrary tree obtained from $F$ by adding edges. 
By the proof above, there is a constant $c'$ such that 
$\tau_{T}(G) \leq c' \cdot \nu_{T}(G)$ for every graph $G$. 
Using Lemma~\ref{lem:F_to_tree}, we obtain
$$
\tau_{\F}(G) \leq \tau_{T}(G) + 2qrt\cdot \nu_{\F}(G) 
\leq c' \cdot \nu_{T}(G) + 2qrt\cdot \nu_{\F}(G) 
\leq (c' + 2qrt)\cdot \nu_{\F}(G) 
$$
for every graph $G$. 
Therefore, the theorem holds with $c:= c' + 2qrt$. 
\end{proof}

We remark that, while Corollary~\ref{cor:main} was 
deduced from Theorem~\ref{thm:main} and the computability of the obstruction set
for graphs of pathwidth at most $t$ (see~\cite{AGK08, L98}), it can alternatively be 
derived directly from Lemma~\ref{lem:main_pw}.

\section{Algorithmic Implications}
\label{sec:algorithms}

While the focus of this article is not algorithms,  
we would nevertheless like to point out a few algorithmic implications of our results.  

First, the proof of Theorem~\ref{thm:main} can be turned into a polynomial-time 
algorithm that, for fixed $\F$, computes 
in polynomial time an $\F$-packing and an $\F$-transversal 
of a given input graph $G$ differing in size by at most a factor $c$. 
This is explained in part by the fact that 
the graph $(G', R)$ in Lemma~\ref{lem:MSO} can be computed in polynomial
time using standard monadic second-order logic 
techniques (as done in~\cite{GroheFPT}, for instance), 
and the same is true for $G'$ in Lemma~\ref{lem:reduction}. 
Moreover, it is not difficult to extend Lemma~\ref{lem:reduction} to obtain that, given 
an $\F$-packing in $G'$, one can compute an $\F$-packing in $G$ of the same size in polynomial time, 
and similarly given an $\F$-transversal of $G'$, one can 
compute an $\F$-transversal in $G$ of no larger size in polynomial time. 
(This is needed when applying a reduction operation in the proof of Theorem~\ref{thm:main}, since 
after having obtained an $\F$-packing and an $\F$-transversal of a reduced graph $G'$, 
we have to ``lift back'' these to the input graph $G$.)

Furthermore, the outcomes of Lemmas~\ref{lem:marked} and~\ref{lem:refinement} 
can easily be computed in polynomial 
time (for fixed $t$). The same is true for 
Lemma~\ref{lem:main_pw} (for fixed $t$ and $r$), 
as its main computational steps are 
(i) breadth-first searches, (ii) calls to Lemmas~\ref{lem:marked} and~\ref{lem:refinement}, and
(iii)  tests of whether a graph has pathwidth at most $i$ 
for some $i \leq t$ (which can be done in linear time when $t$ is fixed, see~\cite{BK96}), 
and each is performed at most linearly-many times (in fact, only a constant number of times for
(i) and (ii)).  

Since the proof of Theorem~\ref{thm:main} makes at most linearly-many 
calls to Lemmas~\ref{lem:reduction}, \ref{lem:main_pw},  
and~\ref{lem:bounded_pathwidth},   
it remains to show 
that Lemma~\ref{lem:bounded_pathwidth} can be realized in polynomial time. While the proof
itself does not directly yield such an algorithm,  
we note that the problem of finding  
a maximum $\F$-packing and that of finding a minimum $\F$-transversal can both be defined  
in monadic second-order logic. Hence, by Courcelle's Theorem~\cite{C90}, 
both problems can be solved in linear time on graphs of bounded pathwidth, 
and therefore an $\F$-packing and an $\F$-transversal
such as promised by Lemma~\ref{lem:bounded_pathwidth} can be found in linear time. 

Second, a closer inspection shows 
that the running time of the algorithm sketched above 
is not only polynomial for fixed $\F$, but is moreover 
of the form $O(g(\F) \cdot n^{\alpha})$ for $n$-vertex
graphs, where $g$ is a function depending only on $\F$, and $\alpha$ is a
constant independent of $\F$. 
Building on this observation we now sketch a modification of the proof of Theorem~\ref{thm:main} to obtain a {\em single-exponential} 
fixed-parameter tractable (FPT) algorithm for the problem of computing a minimum-size 
$\PP_{t}$-transversal of a graph when parameterized by the size of the optimum, 
where $\PP_{t}$ is the finite set of minimal excluded minors
for the class of graphs with pathwidth strictly less than $t$.  
Here single-exponential FPT means that the running time of the algorithm is 
$O(d^{k} \cdot n^{\alpha})$ on instances $G$ with $\tau_{\PP_{t}}(G) = k$, where $d$ is
a constant depending only on $t$, and as before $\alpha$ is an absolute constant.  
Finding such an algorithm was posed as an open problem by Philip {\it et al.\ }~\cite{PRV10}, 
who gave one for the $t=2$ case. 

Given a pair $(G, k)$,  
our (recursive) FPT algorithm either finds 
a $\PP_{t}$-transversal (abbreviated transversal) of $G$ 
of size at most $k$, or correctly answers that there is no such transversal.  
The algorithm can be briefly described as follows (leaving some details to the reader): 
(I) If $G$ is not connected, let $G_{1}, \dots, G_{p}$ denote its components, and 
for each $i \in [1, p]$ and $\l\in [0, k]$ recurse on $(G_{i}, \l)$. Given the results 
of these recursive calls, 
decide whether $G$ has a transversal of size at most $k$.\footnote{Here we use the fact that all graphs in $\PP_{t}$ are connected, which implies that 
$X$ is a transversal of $G$ if and only if $X \cap V(G_{i})$ is a transversal of $G_{i}$
for each $i\in [1,p]$.} 
(II) If $G$ is connected but has pathwidth strictly less than $t$, return an empty transversal. 
(III) If $G$ is connected with pathwidth at least $t$, 
apply Lemma~\ref{lem:main_pw} on $G$ (with $r$ defined as in the 
proof of Theorem~\ref{thm:main}). 
(III.a) If the outcome of Lemma~\ref{lem:main_pw} 
is a {\tsep{(t-1)}} $(G_{1}, G_{2})$ 
of $G$, apply Lemma~\ref{lem:reduction} to obtain a smaller graph $G'$ and recurse on $(G',k)$; 
if $G'$ has no transversal of size at most $k$ then so does $G$, otherwise 
``lift'' the transversal of $G'$ found back to $G$ and return it.
(III.b) If the outcome of Lemma~\ref{lem:main_pw} is a connected subgraph $G'$ 
of $G$ with pathwdith at least $t$ and at most $f(t,r)$ vertices, {\em branch} on
every non-empty subset of $V(G')$ of size at most $k$, 
namely, for every such subset $Y$, recurse on $(G - Y, k - |Y|)$.  
Observe that every transversal of $G$ contains at least one vertex of $G'$. 
If a transversal of $G-Y$ of size at most $k - |Y|$ is found for some subset $Y$, 
return the union of that transversal with $Y$. 
If no transversal is found for any of the subsets $Y$, 
then $G$ has no transversal of size at most $k$. 

It should be clear that the algorithm finds a transversal of size at most $k$ if there is one. 
The running time is $O(d^{k} \cdot n^{\alpha})$ with $d:=2^{f(t,r)}$ and 
$\alpha$ some absolute constant, since 
we perform at most $d$ recursive calls 
in a branching step, and in each one 
the parameter is decreased by at least one. 

We conclude this section by mentioning that the 
optimization problem of 
finding a minimum $\F$-transversal in a graph has 
been considered for various families $\F$ in several recent
works, see~\cite{CLPPS11, CPPW10, cacti, FLM+11, pumpkins, KPP12, PRV10}.  

Very recently, and independently of this work, Fomin {\it et al.\ }~\cite{FLMS12} 
obtained a single-exponential FPT algorithm for the minimum $\F$-transversal problem 
for every finite set $\F$ of {\em connected} graphs containing at least one planar graph. 
Since all graphs in $\PP_{t}$ are connected, this includes in particular a 
single-exponential FPT algorithm for finding a minimum-size $\PP_{t}$-transversal. 
The authors of~\cite{FLMS12} also presented a {\em randomized} (Monte Carlo) 
constant-factor approximation algorithm for finding a minimum-size $\F$-transversal 
when $\F$ is a finite set of graphs containing at least one planar graph (here the graphs in $\F$
are not assumed to be connected). We note that, 
while our constant-factor approximation algorithm described 
at the beginning of this section is restricted to the case where $\F$ contains a forest,  it is fully deterministic, and provides an $\F$-packing of size within a constant factor of optimal as well. 

\section{An Open Problem}
\label{sec:discussion}

For a finite set $\F$ of graphs containing a forest, let 
$$
\rho(\F) := \sup \frac{\tau_{\F}(G)}{\nu_{\F}(G)}
$$
over all graphs $G$ with $\nu_{\F}(G) > 0$. 
By Theorem~\ref{thm:main} this quantity is well defined. 
When $\F$ consists of a single forest $F$, we simply write $\rho(F)$ for $\rho(\F)$. 
For $t \geq 1$, let 
$\phi(t) := \max\{ \rho(F): F \textrm{ forest}, |F|=t\}$. 

A natural next step would be to investigate the order
of magnitude of $\phi(t)$. 
Our proof gives an upper bound 
which is exponential in $t$, 
and almost certainly far from 
the truth. In particular, it would be interesting to decide whether $\phi(t)$ 
is polynomial in $t$.
As for lower bounds, 
we have that $\rho(F) \geq |F|$ for every forest $F$, 
as can be seen by taking $G$ to be a large complete graph, and hence $\phi(t) \geq t$. 
But we do not know of any super-linear lower bound. 

\section*{Acknowledgments}
We would like to acknowledge the participation of 
Bundit Laekhanukit and Guyslain Naves in the early stages of this research project. 
We thank them for several stimulating discussions. 
We also thank Dimitrios Thilikos for useful discussions 
regarding folios. Finally, we thank the anonymous referee for her/his thorough reading and very helpful comments. 

\bibliographystyle{plain}
\bibliography{forest_minors}

\end{document}